\newtheorem{theorem}{Theorem}[section]
\newtheorem{corollary}[theorem]{Corollary}
\newtheorem{lemma}[theorem]{Lemma}
\newtheorem{proposition}[theorem]{Proposition}
\theoremstyle{definition}
\newtheorem{definition}[theorem]{Definition}
\newtheorem{example}[theorem]{Example}
\numberwithin{equation}{section}
\begin{document}


\baselineskip=17pt



\title[On directional derivatives for cone-convex functions]{On directional derivatives for cone-convex functions}

\author[K. Le\'sniewski]{Krzysztof Le\'sniewski}
\address{Faculty of Mathematics and Information Science\\ Warsaw University of Technology\\
00-662 Warszawa, Poland, ul. Koszykowa 75}
\email{k.lesniewski@mini.pw.edu.pl}
\date{}

\begin{abstract}
We investigate the relationship between the existence of directional derivatives for cone-convex functions with values in a Banach space $Y$ and isomorphisms between $Y$ and $c_0.$

\end{abstract}

\subjclass[2010]{Primary 	46B10; Secondary 	46B20}

\keywords{directional derivative, cone isomorphism, convex mappings, cone convex mappings}

\maketitle

\section{Introduction}
Cone convexity of vector-valued functions plays a similar role as convexity for real-valued functions (e.g. local Pareto minima are global etc.).  Directional derivatives of cone-convex functions are used in constructing descent methods for vector-valued mappings (see, e.g. \cite{drummond}). However, in contrast to the real-valued case the existence of directional derivative for cone-convex vector-valued functions is not automatically guaranteed.

If we assume that cone $K\subset Y$ is normal and $Y$ is weakly sequentially complete Banach space, then directional derivatives for $K$-convex functions $F:X \rightarrow Y$ always exists (see \cite{valadier}). The question is what can we say about converse implication. 

In Theorem 5.2 of \cite{lesniewski} it was proved that the existence of directional derivatives of a cone-convex mapping $F:X\rightarrow Y$ defined on linear space $X$  with values in a real Banach space $Y$  implies the  weak sequential completness of the  space $Y$. 

In the proof of Theorem 5.2 of \cite{lesniewski} we construct a cone $K$ and a  $K$-convex function such that the directional derivative does not exist at $0$ for some $h\in X\setminus \{0\}$. However in \cite{lesniewski}, cone $K$ is not normal.

In the present paper we relate the existence of  directional derivatives for $K$-convex functions $F:X\rightarrow Y$, where cone $K$ is normal to the existence of isomorphisms between image space $Y$ and  $c_0$. 

Isomorphisms of a Banach space $Y$ and the space $c_0$ have beed investigated e.g. in \cite{bessaga, laz, rosc0}. A sequence $\{b_j\}$ in a Banach space $Y$ is strongly summing (s.s.) if $\{b_j\}$ is a weak Cauchy basic sequence and for any scalars $\{c_j\}$ satisfying  $\sup_n\|\sum\limits_{j=1}^nc_jb_j\|<+\infty$, the series  $\sum c_j$ converges. In \cite{rosc0} Rosenthal proved the following theorem.

\begin{theorem}(Theorem 1.1. of \cite{rosc0}) A  Banach space $Y$ contains no subspace isomorphic to $c_0$ if and only if every non-trivial weak-Cauchy sequence in $Y$ has a (s.s.)-subsequence. 
\end{theorem}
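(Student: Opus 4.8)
This is Rosenthal's theorem; the natural plan separates the two implications, the forward one (producing the s.s.\ subsequence) being the deep half and its converse elementary.

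\emph{The elementary direction.} To show that the s.s.-subsequence property forces $Y$ to contain no copy of $c_0$, I would argue contrapositively. Let $T\colon c_0\to Y$ be an isomorphic embedding and let $(s_n)$, $s_n=\sum_{j\le n}e_j$, be the summing basis of $c_0$, and set $b_n:=Ts_n$. Since $T^{*}f\in\ell_1$ for every $f\in Y^{*}$, we have $f(b_n)=(T^{*}f)(s_n)\to\sum_j(T^{*}f)_j$, so $(b_n)$ is weak-Cauchy; it is non-trivial because $(s_n)$ has no weak limit in $c_0$ (any candidate limit would be $(1,1,1,\dots)\notin c_0$) and $T$ is an isomorphism onto its range. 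Finally, for any subsequence $(b_{n_k})$ the scalars $c_k=(-1)^k$ give $\sup_m\|\sum_{k\le m}c_k s_{n_k}\|_{c_0}\le 1$ (each coordinate of a partial sum is an alternating $\pm1$ sum), hence $\sup_m\|\sum_{k\le m}c_k b_{n_k}\|<\infty$, while $\sum_k c_k$ does not converge; thus no subsequence of $(b_n)$ is s.s. This exhibits a non-trivial weak-Cauchy sequence without an s.s.\ subsequence, the contrapositive of the desired implication.

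\emph{The deep direction.} Assume $Y$ contains no copy of $c_0$ and let $(b_j)$ be non-trivial weak-Cauchy. Weak-Cauchy sequences are bounded, and non-triviality forces $\liminf_j\|b_j\|>0$ (a norm-null subsequence would drag the whole weak-Cauchy sequence to the weak limit $0$), so after the usual preliminary selection we may assume $(b_j)$ is semi-normalized and basic. Suppose, for contradiction, that no subsequence of $(b_j)$ is s.s. The plan is to convert this uniform failure into a copy of the summing basis: one extracts a convex block basis $u_k=\sum_{j\in I_k}\lambda_j b_j$ (with $I_1<I_2<\cdots$ consecutive, $\lambda_j\ge 0$, $\sum_{j\in I_k}\lambda_j=1$) that is equivalent to $(s_k)$, i.e.\ $\|\sum_k\theta_k u_k\|$ is bounded above and below by fixed multiples of $\|\sum_k\theta_k s_k\|_{c_0}$ for all finitely supported scalars $(\theta_k)$. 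Since the closed linear span of the summing basis is all of $c_0$, such a block basis embeds $c_0$ into $Y$, contradicting the hypothesis; hence some subsequence of $(b_j)$ must be s.s.

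The crux, and the step I expect to be the main obstacle, is the lower $c_0$-estimate for the blocks. The upper estimate is the benign half: each failure of the s.s.\ property supplies scalars whose partial sums $\sum_{j\le n}c_j b_j$ are uniformly norm-bounded, and it is precisely such boundedness that caps $\|\sum_k\theta_k u_k\|$ from above. The difficulty is to guarantee that the blocks do not degenerate — that they genuinely span $c_0$ rather than a smaller space — which demands the matching lower bound. Here I would run a Ramsey-type stabilization in the spirit of Rosenthal's $\ell_1$ theorem: pass to a subsequence on which the relevant convex-combination norms stabilize, so that the witnessing scalars produced by the repeated failure of the s.s.\ property can be chosen with \emph{uniform} constants, and then diagonalize over these witnesses to assemble $(u_k)$. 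Carrying out this extraction while simultaneously controlling both estimates is the technical heart of the theorem.
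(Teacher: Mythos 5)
The paper itself offers no proof of this statement: it is quoted as Theorem~1.1 of Rosenthal's paper \cite{rosc0} and used as background, so your proposal must stand on its own merits rather than be compared with an internal argument.

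Your elementary direction is correct and essentially complete. With $b_n=Ts_n$ for an embedding $T\colon c_0\to Y$ and the summing basis $(s_n)$, the sequence $(b_n)$ is weak-Cauchy because $T^*f\in\ell_1$, it is non-trivial because $(s_n)$ has no weak limit in $c_0$ and $T^{-1}$ is weak-weak continuous on the (weakly closed) range, and the alternating scalars $c_k=(-1)^k$ defeat the s.s.\ property of every subsequence: each coordinate of a partial sum $\sum_{k\le m}(-1)^k s_{n_k}$ is an alternating sum over a contiguous block of indices, hence lies in $\{-1,0,1\}$, so the partial sums are uniformly bounded while $\sum_k(-1)^k$ diverges.

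The deep direction, however, is a statement of intent rather than a proof, and this is a genuine gap. You correctly identify the target — extract from a non-trivial weak-Cauchy sequence with no s.s.\ subsequence a convex block basis $(u_k)$ equivalent to the summing basis, whose closed span is then a copy of $c_0$ — but neither of the two essential steps is carried out: (i) how the failure of the s.s.\ property on \emph{every} subsequence yields, after stabilization, convex blocks satisfying a uniform upper $c_0$-estimate (the witnessing scalars vary with the subsequence, and nothing in your sketch controls them uniformly), and (ii) the lower estimate preventing the blocks from degenerating. You explicitly defer both, invoking an unspecified ``Ramsey-type stabilization.'' This deferred step is precisely the content of the main technical result of \cite{rosc0} (every non-trivial weak-Cauchy sequence has either an s.s.\ subsequence or a convex block basis equivalent to the summing basis), whose proof occupies the core of Rosenthal's paper and does not follow by a routine adaptation of the $\ell_1$-theorem argument, since one must work with convex combinations and the summing basis rather than with signs and the unit vector basis. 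As written, the hard half of the equivalence is assumed, not proved.
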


In \cite{bessaga} Bessaga and Pe\l czy\'nski proved \emph{Bessaga-Pe\l czy\'nski $c_0$-theorem}.
\begin{theorem}[Theorem 5 of \cite{bessaga}]
A Banach space $Y$ does not contain a subspace isomorphic to $c_0$ if and only if every series $\sum\limits_{k=1}^\infty x_k $ such that $\sum\limits_{k=1}^\infty |\langle x_k, x^* \rangle|<\infty$ $\forall x^* \in Y^*$ is unconditionally convergent.
\end{theorem}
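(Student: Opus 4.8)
The plan is to prove the contrapositive equivalence: $Y$ contains a copy of $c_0$ if and only if there is a weakly unconditionally Cauchy (w.u.C.) series in $Y$ — one with $\sum_k |\langle x_k, x^* \rangle| < \infty$ for every $x^* \in Y^*$ — that fails to converge unconditionally. The easy direction is to construct such a series when $c_0 \hookrightarrow Y$. Let $T : c_0 \to Y$ be an isomorphic embedding and put $x_k = T e_k$, where $(e_k)$ is the unit vector basis of $c_0$. For $x^* \in Y^*$ the functional $T^* x^*$ lies in $c_0^* = \ell_1$, so $\sum_k |\langle x_k, x^* \rangle| = \sum_k |\langle e_k, T^* x^* \rangle| = \|T^* x^*\|_1 < \infty$; thus $\sum x_k$ is w.u.C. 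On the other hand the partial sums of $\sum_k e_k$ are not Cauchy in $c_0$ (consecutive ones differ by $1$ in norm), and $T$ is bounded below, so $\sum x_k$ does not converge, hence is not unconditionally convergent.

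For the hard direction, suppose $\sum x_k$ is w.u.C. but not unconditionally convergent, and extract from it a block sequence spanning a copy of $c_0$. First I would record the uniform bound coming from the w.u.C. hypothesis: the linear map $Q : Y^* \to \ell_1$, $Q x^* = (\langle x_k, x^* \rangle)_k$, is everywhere defined and has closed graph, hence is bounded, say $\sum_k |\langle x_k, x^* \rangle| \le C \|x^*\|$. Dualizing gives the upper estimate $\|\sum_{k \in F} a_k x_k\| \le C \max_k |a_k|$ for every finite $F$ and all scalars $a_k$. Next, since $\sum x_k$ is not unconditionally convergent, the Cauchy criterion for unconditional convergence fails: there exist $\epsilon > 0$ and successive finite blocks $F_1 < F_2 < \cdots$ with $\|y_i\| \ge \epsilon$, where $y_i := \sum_{k \in F_i} x_k$. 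The blocked series $\sum y_i$ inherits the w.u.C. property together with the same upper estimate, and since w.u.C. forces $\langle y_i, x^* \rangle \to 0$ for every $x^*$, the sequence $(y_i)$ is weakly null with $\epsilon \le \|y_i\| \le C$.

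Now I would invoke the Bessaga--Pe\l czy\'nski selection principle: a weakly null sequence bounded away from $0$ admits a basic subsequence, so after relabelling I may assume $(y_i)$ is basic with some basis constant $K$. For a basic sequence the coordinate functionals are controlled, $\|y_i^*\| \le 2K / \|y_i\| \le 2K / \epsilon$, whence $|a_j| = |\langle \sum_i a_i y_i, y_j^* \rangle| \le (2K/\epsilon) \|\sum_i a_i y_i\|$ and, taking the supremum over $j$, the lower estimate $\|\sum_i a_i y_i\| \ge (\epsilon / 2K) \max_j |a_j|$. Combined with the upper estimate this shows that $e_i \mapsto y_i$ extends to an isomorphism of $c_0$ onto $\overline{\operatorname{span}}\{y_i\} \subseteq Y$, so $Y$ contains a subspace isomorphic to $c_0$.

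I expect the main obstacle to be the lower $c_0$-estimate. The upper estimate is a soft consequence of the w.u.C. hypothesis, and producing blocks of norm $\ge \epsilon$ is merely the failure of the unconditional Cauchy criterion; but converting $\inf_i \|y_i\| > 0$ into a genuine lower bound $\|\sum_i a_i y_i\| \ge \delta \max_i |a_i|$ on all linear combinations is exactly where one must pass to a basic subsequence and use that its biorthogonal functionals are uniformly bounded. This selection step — essentially a gliding-hump perturbation of a weakly null sequence onto a basic one — is the technical heart of the argument.
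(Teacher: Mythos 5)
The paper does not prove this statement at all: it is quoted in the introduction as known background (Theorem 5 of Bessaga--Pe\l czy\'nski, cited from Studia Math.\ 17, 1958), so there is no in-paper argument to compare yours against. Your proof is correct, and it is essentially the classical argument for this theorem: the closed-graph theorem giving the uniform upper estimate $\|\sum_{k\in F} a_k x_k\|\le C\max_k|a_k|$ for a w.u.C.\ series, the extraction of successive blocks of norm at least $\epsilon$ from the failure of the unconditional Cauchy criterion, and then the Bessaga--Pe\l czy\'nski selection principle together with the uniform bound on biorthogonal functionals to convert $\inf_i\|y_i\|\ge\epsilon$ into the lower estimate $\|\sum_i a_i y_i\|\ge(\epsilon/2K)\max_i|a_i|$, so that $(y_i)$ is equivalent to the unit vector basis of $c_0$. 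All individual steps check out (the closed-graph argument, the block construction, the weak nullity of $(y_i)$, and the two-sided estimate), and your easy direction is also fine. The one point worth making explicit if this were written up in full is that the selection principle you invoke is itself proved by a gliding-hump/Mazur argument independent of the $c_0$-theorem, so there is no circularity in appealing to it.
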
 
In \cite{laz, mey} we can find interesting result for Banach lattices.
\begin{theorem}
\label{dlakrat}
A Banach lattice does not contain a subspace isomorphic to $c_0$ if and only if it is a weakly sequentially complete.
\end{theorem}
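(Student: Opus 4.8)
The plan is to prove the two implications separately, treating the forward implication (weak sequential completeness $\Rightarrow$ no copy of $c_0$) as routine and concentrating on the converse, which is where the lattice structure and the quoted theorems do the work.

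For the easy direction I would argue by contraposition, using that weak sequential completeness passes to closed subspaces: a weakly Cauchy sequence in a closed subspace $Z\subset Y$ is weakly Cauchy in $Y$, hence has a weak limit $y$, and $y\in Z$ because $Z$, being norm-closed and convex, is weakly closed; an extension-by-Hahn--Banach argument then shows the convergence is weak in $Z$. On the other hand $c_0$ is not weakly sequentially complete, since the summing sequence $\sigma_n=e_1+\dots+e_n$ is weakly Cauchy but has no weak limit in $c_0$. Hence a copy of $c_0$ inside $Y$ would already fail to be weakly sequentially complete, and therefore $Y$ would fail too.

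For the converse I would pass through the notion of a $\mathrm{KB}$-space, i.e.\ a Banach lattice in which every norm-bounded increasing sequence of positive elements is norm-convergent, and prove (a) \emph{no copy of $c_0$ $\Rightarrow$ KB} and (b) \emph{KB $\Rightarrow$ weak sequential completeness}. Step (a) is where I would use the Bessaga--Pe\l czy\'nski $c_0$-theorem quoted above (Theorem 5 of \cite{bessaga}). Arguing by contraposition, if $Y$ is not KB there is a norm-bounded increasing sequence $(s_m)$ in $Y^+$ that is not norm-Cauchy; choosing indices $n_1<n_2<\cdots$ with $\|s_{n_{k+1}}-s_{n_k}\|\ge\varepsilon$ and setting $y_k=s_{n_{k+1}}-s_{n_k}\ge 0$ gives positive vectors with $\|y_k\|\ge\varepsilon$ and $\sup_p\|\sum_{k=1}^p y_k\|\le\sup_m\|s_m\|=:M<\infty$, since the partial sums telescope and are dominated by the $s_m$. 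The series $\sum_k y_k$ is then weakly unconditionally Cauchy: for every $x^*\in Y^*$, writing $|x^*|$ for its modulus in the dual lattice (so $\||x^*|\|=\|x^*\|$) and using $|\langle y_k,x^*\rangle|\le\langle y_k,|x^*|\rangle$ for $y_k\ge 0$, one gets $\sum_{k=1}^p|\langle y_k,x^*\rangle|\le\langle\sum_{k=1}^p y_k,|x^*|\rangle\le M\|x^*\|$, so $\sum_k|\langle y_k,x^*\rangle|<\infty$ for all $x^*$. Since $Y$ contains no copy of $c_0$, the Bessaga--Pe\l czy\'nski theorem forces $\sum_k y_k$ to converge unconditionally, whence $\|y_k\|\to 0$, contradicting $\|y_k\|\ge\varepsilon$.

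The remaining step (b), KB $\Rightarrow$ weak sequential completeness, is the main obstacle and the point where I would lean on standard Banach-lattice theory. Given a weakly Cauchy sequence $(x_n)$ one forms the functional $\xi\in Y^{**}$, $\xi(x^*)=\lim_n\langle x_n,x^*\rangle$ (its weak$^*$ limit), and the task is to show $\xi$ lies in the canonical image of $Y$, whence $x_n\to\xi$ weakly. Here I would invoke the characterisation that a Banach lattice is a KB-space precisely when it is a band in its bidual, together with the fact that a KB-space has order-continuous norm; these two facts are exactly what is needed to exclude a nonzero ``singular part'' of $\xi$ and conclude $\xi\in Y$. As an alternative to (a) that avoids the quoted theorem, one can build the copy of $c_0$ directly: the vectors $y_k$ above are weakly null (the bounded monotone $(s_m)$ converges weak$^*$ in $Y^{**}$, so the differences tend to $0$ weakly), and a gliding-hump argument yields a subsequence summably close to a disjoint positive sequence $(z_j)$; for disjoint positive vectors the identity $\bigl|\sum_j a_j z_j\bigr|=\bigvee_j|a_j|\,z_j$ gives at once $\varepsilon\max_j|a_j|\le\bigl\|\sum_j a_j z_j\bigr\|\le M\max_j|a_j|$, so $(z_j)$ — and hence the perturbed subsequence of $(y_k)$ — is equivalent to the unit vector basis of $c_0$. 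The delicate ingredient in this alternative is precisely the disjointification (the gliding hump), which is the lattice-specific difficulty that the Bessaga--Pe\l czy\'nski route lets us sidestep.
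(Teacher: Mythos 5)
Your proposal cannot be checked against a proof in the paper, because the paper gives none: Theorem \ref{dlakrat} is quoted there as a known result, with a pointer to \cite{laz, mey}. What you have reconstructed is essentially the standard literature proof, namely the KB-space chain: no copy of $c_0$ $\Rightarrow$ KB $\Rightarrow$ band in the bidual $\Rightarrow$ weak sequential completeness $\Rightarrow$ no copy of $c_0$. The parts you actually carry out are correct. The forward direction (heredity of weak sequential completeness to closed subspaces, plus the summing sequence in $c_0$) is complete. Your step (a) is also complete and is a nice use of the Bessaga--Pe\l czy\'nski theorem \cite{bessaga} that the paper itself quotes: the telescoped differences $y_k=s_{n_{k+1}}-s_{n_k}$ are positive, bounded in partial sums, hence weakly unconditionally Cauchy by the modulus estimate $|\langle y_k,x^*\rangle|\le\langle y_k,|x^*|\rangle$, and unconditional convergence then contradicts $\|y_k\|\ge\varepsilon$. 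The one detail you gloss over --- extracting consecutive indices with $\|s_{n_{k+1}}-s_{n_k}\|\ge\varepsilon$ from mere non-Cauchyness --- is harmless: monotonicity of the lattice norm upgrades $\|s_q-s_p\|\ge\varepsilon$ to $\|s_q-s_{n_k}\|\ge\varepsilon$ whenever $n_k\le p<q$.

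The genuine soft spot is step (b). You claim that the two cited facts (KB $\Leftrightarrow$ band in $Y^{**}$, KB $\Rightarrow$ order continuous norm) are ``exactly what is needed'' to kill the singular part of $\xi$, but that exclusion is not a formal consequence of them. By Goldstine's theorem every element of $Y^{**}$ is a weak$^*$ limit of elements of $Y$, so weak$^*$ approximability by $Y$ says nothing about the band decomposition of $\xi$; what must be proved is that the weak$^*$ limit of a weakly Cauchy \emph{sequence} from $Y$ has zero component in the band disjoint from $Y$, and that statement \emph{is} the theorem ``KB $\Rightarrow$ weakly sequentially complete'' --- precisely the result of \cite{mey} --- whose standard proof requires real machinery (pass to the band generated by the sequence, which has a weak unit by order continuity; represent it as an ideal in some $L^1(\mu)$; use weak sequential completeness of $L^1$). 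So, read as a self-contained argument, your proof has a gap at exactly its deepest point; read as an argument modulo standard Banach-lattice theory, it is correct, but then one may as well cite the full equivalence, which is what the paper does. The same caveat applies to your alternative gliding-hump route, where the disjointification you defer is again the substantive step.
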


The organization of the paper is as follows. In Section 2 we present basic notions and facts about cone-convex functions. Section 3 is devoted to normal cones in Banach spaces. In Section 4 we present basic  construction of cone-convex functions (cf. \cite{lesniewski}) which is used in our main result.  Section 5 contains the main result.

\section{Notations and preliminaries}
Let $X$ be a linear space over reals. Let $Y$ be a normed space  over reals and let $Y^*$ be the norm dual of $Y$. 
\begin{definition}
	Let $A\subset X$. The function $F: A \rightarrow Y$ is \emph{directionally differentiable} at
	$x_0 \in A $ in the direction $h\neq 0$ such that $x_0+th \in A$ for all  $t$ sufficiently small
	 if  the limit
	
	\begin{equation}
	F'(x_0;h): = \lim_{t\downarrow 0 } \frac{F(x_0+th)- F(x_0)}{t}
	\end{equation}
	exists. The element $F'(x_0;h)$ is called the \emph{directional derivative} of $F$ at $x_{0}$ in the direction $h.$
\end{definition}

A nonempty subset $K$ of $Y$ is called a \emph{cone}  if $\lambda K \subset K$ for every $\lambda \ge 0 $ and $K+K\subset K.$
The relation $x\le_K y$ $(y\ge_K x)$ is defined as follows
$$
x\le_K y\  (y\ge_K x)\ \ \Leftrightarrow y-x \in K.
$$

The \emph{dual cone} \cite{jahn} of a cone $K$ is defined as  
\begin{equation}
\label{dualny1}
K^*=\{ y^* \in Y^*: y^*(y) \ge 0 \ \ \mbox{ for all } y \in K\}.
\end{equation}
In the space $c_0:=\{ x=(x_1,x_2,\dots), x_i \in \mathbb{R}, \lim\limits_{i\rightarrow \infty} x_i = 0\},$ the cone $c_0^+,$ 
 $$
c_0^+:=\{ x=(x_1,x_2,\dots) \in c_0 : \ x_i \ge 0,\ i=1,2,\dots\},
$$
is a closed convex pointed (i.e. $K\cap (-K)=\{0\}$). 
Cone $c_0^+$ is generating in $c_0$ i.e. $c_0=c_0^+ - c_0^+.$
\begin{example}
Let us show that 
$(c_0^+)^*=l_1^+:=\{ g=(g_1,g_2,\dots) \in l_1: g_i \ge 0, \ i=1,2\dots \}.$
Since $c_0^*=l_1$ for any $y^*=(g_1,g_2,\dots )\in (c_0^+)^*, $ we have $\sum\limits_{i=1}^\infty |g_i| < \infty.$
Since every $e_i=(0,0,\dots,\underbrace{1}_i,0,\dots),$ $i=1,2,\dots $ is an element of $c_0^+$  we have
$$y^*(e_i)=g_i, \ i=1,2,\dots\  .$$  By the definition of dual cone \eqref{dualny1}, we get
 $
g_i\ge 0 \mbox{ for all } i=1,2,\dots $ .
On the other hand, for any $x\in (c_0^+)^*$  and any $y^*=(g_1,g_2,\dots )\in l_1^+$ we have 
$y^*(x)\ge 0.$ 
\end{example}
\begin{definition}
\label{Kwypuklosc}
Let $K\subset Y$ be a cone. Let $A\subset X$ be a convex set.
We say that a function $F:X\rightarrow Y$ is \emph{$K$-convex} on $A$ if $\forall \,x, y \in A $ and $\forall\, \lambda \in [0,1]$ 
$$
\lambda F(x)+ (1-\lambda) F(y) - F(\lambda x + (1-\lambda ) y ) \in K. 
$$
\end{definition}
Some properties of $K$-convex functions can be found in e.g. \cite{lesniewski, drummond, pennanen}. 
The following characterization is given in \cite{pennanen} for finite dimensional case. 
\begin{lemma}[Lemma 3.3 of \cite{lesniewski}]
	\label{lemma-convex}
	 Let $A\subset X$ be a convex subset of $X$. Let $K\subset Y$ be a  closed convex  cone  and let  $F:X\rightarrow Y$ be a function. The following conditions are equivalent.
\begin{enumerate}
\item
$
\mbox{The function }F \mbox{ is $K$-convex on }A.$
\item $\mbox{For any } u^{*}\in K^{*} \mbox{, the composite function }  u^{*}(F):A\rightarrow\textbf{R}
\mbox{ is convex}$.
\end{enumerate}
\end{lemma}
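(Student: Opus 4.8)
The plan is to prove the two implications separately. The forward direction (1) $\Rightarrow$ (2) is a direct computation, while the converse (2) $\Rightarrow$ (1) rests on a separation argument that is where the hypotheses on $K$ are actually consumed.

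First I would establish (1) $\Rightarrow$ (2). Fix $u^*\in K^*$ and take arbitrary $x,y\in A$ and $\lambda\in[0,1]$. By $K$-convexity the element $\lambda F(x)+(1-\lambda)F(y)-F(\lambda x+(1-\lambda)y)$ belongs to $K$, and since $u^*$ is nonnegative on $K$ by the definition \eqref{dualny1} of the dual cone, applying $u^*$ and using its linearity yields $\lambda u^*(F(x))+(1-\lambda)u^*(F(y))\ge u^*(F(\lambda x+(1-\lambda)y))$, which is exactly the convexity of $u^*(F)$. This half uses neither closedness nor convexity of $K$ and is routine.

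For the converse (2) $\Rightarrow$ (1), fix $x,y\in A$ and $\lambda\in[0,1]$ and set $z:=\lambda F(x)+(1-\lambda)F(y)-F(\lambda x+(1-\lambda)y)$. Convexity of each scalar function $u^*(F)$ gives $u^*(z)\ge 0$ for every $u^*\in K^*$. The task is then to deduce $z\in K$; that is, to show that the set of points $y\in Y$ with $u^*(y)\ge 0$ for all $u^*\in K^*$ coincides with $K$ itself. This is precisely the bipolar relation for cones, and it is here that $K$ being \emph{closed} and \emph{convex} becomes indispensable.

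The key step, which I expect to be the main obstacle, is the separation argument proving this bipolar identity. Suppose toward a contradiction that $z\notin K$. Since $K$ is a closed convex set and $\{z\}$ is compact, the Hahn--Banach separation theorem furnishes $u^*\in Y^*$ and $\alpha\in\mathbb{R}$ with $u^*(z)<\alpha\le u^*(k)$ for all $k\in K$. Because $K$ is a cone we have $0\in K$, which forces $\alpha\le 0$ and hence $u^*(z)<0$; moreover, replacing $k$ by $tk$ and letting $t\to+\infty$ in the inequality $t\,u^*(k)\ge\alpha$ shows $u^*(k)\ge 0$ for every $k\in K$, so that $u^*\in K^*$. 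This contradicts $u^*(z)\ge 0$. Therefore $z\in K$, and $F$ is $K$-convex on $A$, completing the equivalence.
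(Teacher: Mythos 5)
Your proof is correct. The paper does not actually prove this lemma---it imports it verbatim as Lemma 3.3 of the cited reference \cite{lesniewski}---so there is no in-paper argument to compare against; your two-step argument (applying functionals from $K^*$ for the forward implication, and for the converse using Hahn--Banach strict separation of the point $z$ from the closed convex set $K$, together with the cone trick of scaling $k\mapsto tk$ and letting $t\to+\infty$ to show the separating functional lies in $K^*$) is the standard proof of this bipolar-type equivalence and correctly isolates where the closedness and convexity of $K$ are consumed.
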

\section{Normal Cones}
In a  normed space $Y$ a cone $K$ is normal (see \cite{polyrakiss}) if there is a number $C>0$ such that 
$$
0\le_K x\le_K y \Rightarrow \|x\|\le C \|y\|.
$$
Some useful characterizations of normal cones are given  in the following lemmas.
\begin{lemma}{\cite{peressini}}
Let $Y$ be an ordered topological vector space with positive cone $K.$ The following assertions are equivalent. 
\begin{itemize}
\item $K\subset Y$ is normal.
\item For any two nets $\{ x_\beta \ : \ \beta \in I \} $ and $\{ y_\beta :\ \beta \in I  \}$, if $0\le_K x_\beta \le_K y_\beta $ for all $\beta \in I$ and $\{ y_\beta \}$ converges to $0,$ then $\{ x_\beta \} $ converges to $0$.
\end{itemize} 
\end{lemma}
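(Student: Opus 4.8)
The plan is to read \emph{normal} in the topological sense used by Peressini rather than via a constant: $K$ is normal precisely when $0$ has a neighborhood base consisting of full sets, where the full hull of a set $A$ is $[A]=(A+K)\cap(A-K)$ and $A$ is called full when $A=[A]$. Two elementary facts drive everything. First, for any $A$ the hull $[A]$ is itself full and contains $A$, so the full hull of a neighborhood of $0$ is again a full neighborhood of $0$. Second, $z\in[A]$ holds exactly when there are $a,b\in A$ with $a\le_K z\le_K b$. Combining these, I would first record the reformulation that $K$ is normal if and only if for every neighborhood $U$ of $0$ there is a full neighborhood $V\subset U$; equivalently, the full neighborhoods of $0$ form a base.

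For the implication normal $\Rightarrow$ net squeeze I would argue directly. Let $\{x_\beta\}$ and $\{y_\beta\}$ satisfy $0\le_K x_\beta\le_K y_\beta$ with $y_\beta\to 0$, and fix a neighborhood $U$ of $0$. Choosing a full neighborhood $V\subset U$, the convergence $y_\beta\to 0$ yields $\beta_0$ with $y_\beta\in V$ for all $\beta\ge\beta_0$. Since $0\in V$, $y_\beta\in V$, and $0\le_K x_\beta\le_K y_\beta$, fullness of $V$ forces $x_\beta\in V\subset U$ for $\beta\ge\beta_0$. As $U$ was arbitrary, $x_\beta\to 0$.

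For the converse I would prove the contrapositive, and here the net formulation is essential. Assume $K$ is not normal. Then some neighborhood $U_0$ of $0$ admits no full neighborhood inside it; since $[V]$ is a full neighborhood containing $V$, this means $[V]\not\subset U_0$ for \emph{every} neighborhood $V$ of $0$. Hence for each such $V$ I can select $z_V\in[V]\setminus U_0$, and by the hull description points $a_V,b_V\in V$ with $a_V\le_K z_V\le_K b_V$. I then index a net by the neighborhood filter $\mathcal{N}(0)$ directed by reverse inclusion, and set $x_V:=z_V-a_V$ and $y_V:=b_V-a_V$. From $a_V\le_K z_V\le_K b_V$ I get $0\le_K x_V\le_K y_V$, while $y_V=b_V-a_V\to 0$ because $a_V,b_V\in V$ and the net runs through ever smaller neighborhoods of $0$. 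Finally, if $x_V\to 0$ then $z_V=x_V+a_V\to 0$, so eventually $z_V\in U_0$, contradicting $z_V\notin U_0$; thus $x_V\not\to 0$, and the squeeze property fails.

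The main obstacle is this reverse direction: extracting, from the bare failure of the neighborhood-base condition, explicit squeezed nets that witness the failure of order convergence. The decisive point is that the indexing must run over the whole neighborhood filter $\mathcal{N}(0)$ rather than over a sequence, since $Y$ need not be first countable; this is exactly why the statement is phrased with nets and why it cannot in general be reduced to sequences. The remaining verifications I would still carry out are routine: that $\mathcal{N}(0)$ under reverse inclusion is directed, that $[A]$ is full with $[A]=\{z:\exists\,a,b\in A,\ a\le_K z\le_K b\}$, and that both $a_V$ and $b_V$ converge to $0$ along this net.
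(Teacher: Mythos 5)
Your proof is correct, but there is nothing in the paper to compare it against: the lemma is quoted from Peressini's book \cite{peressini} without proof, and your argument is in substance the classical one from that source. Both directions are sound: the forward implication correctly exploits $0\in V$ together with fullness of $V$ to squeeze $x_\beta$ into $V$, and the contrapositive of the converse correctly indexes the witnessing nets by the neighborhood filter $\mathcal{N}(0)$ under reverse inclusion, which is exactly where the net (rather than sequence) formulation is needed in the absence of first countability. One definitional point is worth making explicit: Section 3 of the paper defines normality only for normed spaces, via a constant $C$ with $0\le_K x\le_K y\Rightarrow\|x\|\le C\|y\|$, whereas the lemma is stated for ordered topological vector spaces; your reading of normality as the existence of a neighborhood base at $0$ consisting of full sets is the appropriate one in this generality and reduces to the constant formulation in the normed case. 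The routine verifications you deferred do go through: $\mathcal{N}(0)$ is directed since $V_1\cap V_2$ refines both $V_1$ and $V_2$; the identity $[A]=\{z:\exists\,a,b\in A,\ a\le_K z\le_K b\}$ is immediate from $[A]=(A+K)\cap(A-K)$, and fullness of $[A]$ follows by transitivity of $\le_K$ (i.e.\ $K+K\subset K$); and $y_V=b_V-a_V\to 0$ uses continuity of subtraction (given a neighborhood $W$ of $0$, pick $V_0$ with $V_0-V_0\subset W$ and restrict to $V\subset V_0$), with $z_V=x_V+a_V\to 0$ by continuity of addition yielding the final contradiction with $z_V\notin U_0$.
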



For lattice cones in Riesz spaces defined as in \cite{aliprantis} we get the following Lemma.
\begin{lemma}[Lemma 2.39 of \cite{aliprantis}]
Every lattice cone in Riesz space is normal closed and generating.
\end{lemma}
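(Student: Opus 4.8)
The plan is to work in the setting where all three notions are meaningful, namely a normed Riesz space $Y$ whose norm is a lattice norm (so that $|x|\le_K|y|$ forces $\|x\|\le\|y\|$), with $K=Y^+=\{x\in Y:x\ge_K 0\}$ the positive lattice cone and $|x|:=x\vee(-x)$. I would treat the three assertions in increasing order of difficulty, and I expect closedness to be the only step that genuinely uses the topology; generating and normality are essentially formal consequences of the lattice structure and the lattice norm, respectively.

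The generating property is immediate from the Riesz decomposition: every $x\in Y$ satisfies $x=x^+-x^-$ with $x^+:=x\vee 0\in K$ and $x^-:=(-x)\vee 0\in K$, whence $Y=K-K$. Normality is equally direct, with constant $C=1$: if $0\le_K x\le_K y$ then both elements are already positive, so $|x|=x\le_K y=|y|$, and the lattice norm property gives $\|x\|\le\|y\|$.

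The substantive step is closedness. Here I would first invoke the non-expansiveness of the lattice operations, namely the Riesz-space inequality $|x^--y^-|\le_K|x-y|$ (which follows, via $x^-=(-x)^+$, from $|a^+-b^+|\le_K|a-b|$). Applying the lattice norm property to this inequality yields $\|x^--y^-\|\le\|x-y\|$, so the map $x\mapsto x^-$ is Lipschitz continuous. Since $x\in K$ exactly when $x^-=0$, the cone $K=\{x\in Y:x^-=0\}$ is the preimage of the closed singleton $\{0\}$ under a continuous map; concretely, if $x_n\in K$ and $x_n\to x$ then $\|x^-\|=\|x^--x_n^-\|\le\|x-x_n\|\to 0$, forcing $x^-=0$ and hence $x\in K$.

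The main obstacle is therefore the justification of the non-expansiveness inequality; rather than reprove it, I would deduce it from the elementary lattice estimates $a^+-b^+\le_K(a-b)^+\le_K|a-b|$ together with the symmetric estimate obtained by interchanging $a$ and $b$. This is the one place where the genuine lattice structure of $K$, as opposed to mere convexity of the cone, is essential, and it is exactly what allows the continuity of $x\mapsto x^-$ that closes the argument.
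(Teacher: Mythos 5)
The paper never proves this lemma at all: it is imported verbatim as Lemma 2.39 of \cite{aliprantis}, so there is no internal proof to compare against and your argument has to stand on its own --- which it does. Your three steps are correct: the generating property from the Riesz decomposition $x=x^+-x^-$; normality with constant $C=1$ from $0\le_K x\le_K y \Rightarrow |x|=x\le_K y=|y|$ together with the lattice-norm property; and closedness from the $1$-Lipschitz continuity of $x\mapsto x^-$, which you correctly reduce to the Birkhoff-type inequality $|a^+-b^+|\le_K |a-b|$, itself correctly obtained from $a^+-b^+\le_K (a-b)^+\le_K |a-b|$ (via $a\le_K (a-b)^+ + b^+$ and $0\le_K (a-b)^+ + b^+$) and the symmetric estimate. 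Two remarks are worth making. First, the lemma as quoted in the paper is stated sloppily: a bare Riesz space carries no topology, so ``normal'' and ``closed'' are not even meaningful there, and for an arbitrary norm on a Riesz space both can fail; the hypothesis you silently supplied --- that the norm is a lattice norm (equivalently, that the topology is locally solid, which is the standing assumption in the cited source) --- is genuinely needed, and identifying it is part of the content of your proof. Second, your argument is essentially the normed-space instance of the textbook proof: in the general locally solid setting one replaces your norm inequalities by solid neighborhoods of zero and the uniform continuity of the lattice operations, and closedness additionally requires the topology to be Hausdorff. Since the paper only ever applies the lemma to Banach lattices, your normed-space version fully covers what is needed.
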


In some infinite dimensional spaces there are pointed generating cones which are not normal.

\begin{example}[Example 2.41 of \cite{aliprantis}]
Let $Y=C^1[0,1]$ be the real vector space of all continuously differentiable functions on $[0,1]$ and let cone $K$ be defined as
$$
K:= \{ x\in C^1[0,1]: x(t) \ge 0 \mbox{ for all } t\in [0,1]\}.
$$
Let us consider the norm 
$$
\| x\|= \|x\|_\infty + \|x'\|_\infty,
$$
where $x'$ denotes the derivative of $x\in Y.$
Cone $K$ is closed and generating but it is not normal. Let $x_n:= t^n$ and $y_n:=1$, we have $0\le_K x_n \le_K 1$. There is no constant $c>0$ such that the inequality 
$\|x_n\|=\|t^n\|_\infty + \|nt^{n-1}\|_\infty= n+1\le c=c\|y\|$ holds for all $n$.
\end{example}
Some interesting results (see e.g. \cite{mcarthur,singer,polyrakis}) for closed convex cones are using the concept  of a basis of a space.
\begin{definition}[Definition 1.1.1 of \cite{albiac}]
\label{basic}
A sequence $\{x_n\}\subset Y$ in an infinite-dimensional Banach space $Y$ is said to be a  \emph{basis} of $Y$ if for each $x\in Y$ there is a unique sequence of scalars $\{a_n\}_{n\in \mathbb{N}}$ such that 
$$
x=\sum\limits_{n=1}^{\infty} a_n x_n.
$$

\end{definition}
For basis $\{x_n\}$ we can define the cone associated to the basis $\{x_n\}.$

\begin{definition}[Definition 10.2 of \cite{singer}]
Let $\{x_n\}$ be a basis of a Banach space $Y$. The set 
$$
K_{\{x_n\}}:= \{ y\in Y : y=\sum\limits_{i=1}^{\infty}  \alpha_i x_i \in Y \ : \alpha_i \ge 0 , i=1,2,\dots  \}
$$
is called the cone associated to the basis $\{x_n\}.$
\end{definition}
Cone $K_{\{x_n\}}$ is a closed and convex and coincides with the cone generated by $\{x_n\}$ i.e. it is the smallest cone containing $\{x_n\}$.

For a basis $\{x_n\}$ of a Banach space $Y$ functionals $\{x_n^*\}$ are called \emph{biorthogonal functionals} if $x_k^*(x_j)=1$ if $k=j$, and $x^*_k(x_j)=0$ otherwise, for any $k,j\in \mathbb{N}$ and $x=\sum\limits_{i=1}^\infty x_i^*(x)x_i$ for each $x\in X.$ The sequence $\{x_n, x_n^*\}$ is called the \emph{ biorthogonal system}.

It is easy to see that $\{e_i\},$ where $e_i=\underbrace{(0,\dots,1,0,\dots)}_i$, $i=1,2,\dots$ is a basis for $c_0$ and  $\{e_i^*\}$, $e_i^*:=e_i$  are biorthogonal functionals.

We also have 
$c_0^+ = K_{\{e_i\}}$ and $(c_0^+)^*=l_1^+= K_{\{e_i^*\}}.$
\begin{definition}[Definition 3.1.1 of \cite{albiac}]
A basis $\{x_n\}$ of a Banach space $Y$ is called \emph{unconditional} if for each $x\in Y$ the series $\sum\limits_{n=1}^\infty x^*_n(x)x_n$ converges unconditionally.
\end{definition}
A basis $\{x_n\}$ is \emph{conditional} if it is not unconditional.
A sequence $\{x_n\}\subset X$ is \emph{complete} (see \cite{terenzi}) if $\overline{span\{x_n\}}=X.$
\begin{theorem}[Proposition 3.1.3 of \cite{albiac}]
\label{al}
Let $\{x_n\}$ be a complete sequence in a Banach space $Y$ such that $x_n \neq 0$ for every $n$. Then the following statements are equivalent. 

\begin{enumerate}
\item
$\{x_n\}$ is an unconditional basis for $Y$.
\item
$ \exists \ C_1 \ge 1$\ $\forall N\ge 1$ \   $\forall c_1,\dots,c_N$\ \ $\forall \varepsilon_1,\dots,\varepsilon_N=\pm 1,$
\begin{equation}
\label{nier}
\|\sum\limits_{n=1}^N \varepsilon_n c_n x_n\| \le C_1 \|\sum\limits_{n=1}^N  c_n x_n\|.
\end{equation}

\end{enumerate}
\end{theorem}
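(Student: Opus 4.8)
The plan is to prove the two implications separately, in each direction recasting the sign-flip inequality \eqref{nier} as a statement about the boundedness of the natural ``sign-changing'' projections associated with the sequence $\{x_n\}$.

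For $(1)\Rightarrow(2)$, assume $\{x_n\}$ is an unconditional basis with biorthogonal functionals $\{x_n^*\}$, and for each finite $A\subset\mathbb{N}$ consider the bounded operator $S_A x=\sum_{n\in A}x_n^*(x)x_n$. The first point to establish is that, for every fixed $x$, the set of subsums $\{S_A x : A \text{ finite}\}$ is bounded; this follows from the net convergence $S_A x\to x$ (the working form of unconditional convergence) by splitting an arbitrary finite $B$ as $B\cup A_0$ minus a subset of a fixed stabilizing set $A_0$. Once this pointwise boundedness is in hand, the uniform boundedness principle gives $K:=\sup_A\|S_A\|<\infty$. Then, for given $N$, scalars $c_1,\dots,c_N$ and signs $\varepsilon_n$, I set $x=\sum_{n=1}^N c_n x_n$ and split $\{1,\dots,N\}$ into $A^+=\{n:\varepsilon_n=1\}$ and $A^-=\{n:\varepsilon_n=-1\}$; since $x_n^*(x)=c_n$ for $n\le N$, we get $\sum_{n=1}^N\varepsilon_n c_n x_n=S_{A^+}x-S_{A^-}x$, so \eqref{nier} holds with $C_1=\max(1,2K)$.

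For $(2)\Rightarrow(1)$, I first extract from \eqref{nier} the subset estimate: choosing $\varepsilon_n=1$ on $A$ and $\varepsilon_n=-1$ off $A$ and averaging the resulting two sums yields $\|\sum_{n\in A}c_n x_n\|\le\frac{1+C_1}{2}\|\sum_{n=1}^N c_n x_n\|$ for every $A\subset\{1,\dots,N\}$. Taking $A=\{1,\dots,m\}$ gives the uniform partial-sum bound $\|\sum_{n=1}^m c_n x_n\|\le\frac{1+C_1}{2}\|\sum_{n=1}^N c_n x_n\|$ for all $m\le N$, which, together with completeness and $x_n\neq0$, shows by the classical Banach criterion for basic sequences that $\{x_n\}$ is a Schauder basis of $Y$; let $\{x_n^*\}$ denote its biorthogonal functionals. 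To upgrade this to an unconditional basis I verify the Cauchy criterion for unconditional convergence of $\sum_n x_n^*(x)x_n$: fixing $x$ and $\epsilon>0$, convergence of this series (which holds since $\{x_n\}$ is already a basis) provides $N_0$ with $\|\sum_{n=m}^{m'}x_n^*(x)x_n\|<\frac{2\epsilon}{1+C_1}$ whenever $N_0\le m\le m'$; applying the subset estimate to the window $\{m,\dots,m'\}$ then bounds $\|\sum_{n\in A}x_n^*(x)x_n\|$ by $\epsilon$ for every finite $A\subset\{N_0,N_0+1,\dots\}$. Hence the series converges unconditionally and $\{x_n\}$ is an unconditional basis.

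The main obstacle is the uniform boundedness step in $(1)\Rightarrow(2)$, namely passing from the pointwise boundedness of the subsum operators $S_A$ to the uniform bound $\sup_A\|S_A\|<\infty$; this is exactly where the structure of unconditional convergence is genuinely used, via the boundedness of subsums and the uniform boundedness principle. By contrast, the converse direction is essentially a mechanical manipulation of \eqref{nier}, relying only on the averaging trick and the standard Banach basic-sequence criterion.
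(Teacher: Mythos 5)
The paper never proves this statement: it is quoted as a known result (Proposition 3.1.3 of the cited Albiac--Kalton book), so there is no in-paper argument to compare against. Your proof is correct and is essentially the standard textbook argument for this equivalence: for $(1)\Rightarrow(2)$, pointwise boundedness of the subset projections $S_A$ (via net convergence of the unconditional expansions) plus the uniform boundedness principle, and for $(2)\Rightarrow(1)$, the averaging trick to get the subset estimate, Grunblum's criterion together with completeness and $x_n\neq 0$ to get a Schauder basis, and the Cauchy criterion applied to windows $\{m,\dots,m'\}$ (legitimately, by zero-padding the coefficients below $m$) to upgrade to unconditional convergence.
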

First example of conditional basis for $c_0$ was given by Gelbaum \cite{gelbaum}.
\begin{example}
Basis $\{x_n\}$ defined as $x_n:=(1,1,\dots,1,0,\dots)=\sum\limits_{i=1}^n e_i$, ($\{e_i\}$ is the canonical basis for $c_0$), $n=1,2,\dots$ is a conditional basis for $c_0.$ 
All calculations can be found in Example 14.1 p.424 \cite{singer}.
\end{example}
\begin{example}
\label{exex}
Let us show that $\{b_i\}\subset c_0$ defined as
\begin{equation}
\label{baz}
b_i=\frac 1 i e_i, \ i=1,2,\dots,
\end{equation}
($\{e_i\}$ is the canonical basis for $c_0$)
is an unconditional basis.
\end{example}
Since $\|x\|=\sup\limits_i |x^i|$ for $x=(x^1,x^2,\dots)\in c_0$, inequality \eqref{nier} is satisfied with $C_1=1$ 
\begin{align*}
& \|\varepsilon_1 c_1 (1,0,\dots)+\dots+\varepsilon_N c_N (0,0 ,\dots, \frac{1}{N}, 0, \dots)\| = \\
& \|(\varepsilon_1 c_1,\dots, \varepsilon_N c_N \frac{1}{N}, 0, \dots)\|=\|(c_1,\dots, c_N \frac{1}{N}, 0, \dots)\|.
\end{align*}

In \cite{mcarthur, singer} we can find a characterization of normal cones in terms of unconditional bases. 
\begin{theorem}[Theorem 16.3 of \cite{singer}]
\label{mc}
Let $\{x_n, f_n\}$ be a  complete biorthogonal system in a Banach space $Y.$ The following are equivalent.
\begin{enumerate}
\item
$\{x_n\}$ is an unconditional basis for $Y$.
\item
$K_{\{x_n\}}$ is normal and generating.
\end{enumerate}
\end{theorem}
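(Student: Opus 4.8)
The plan is to prove both implications through the sign-characterization of unconditionality in Theorem~\ref{al}, reformulated as the uniform boundedness of the finite \emph{support projections} $P_A\left(\sum_n c_n x_n\right):=\sum_{n\in A}c_n x_n$, for $A\subset\mathbb{N}$ finite. Indeed, identifying a sign pattern $\varepsilon$ with the set $A=\{n:\varepsilon_n=1\}$, one has $\sum_n\varepsilon_n c_n x_n=2P_A\big(\sum_n c_n x_n\big)-\sum_n c_n x_n$, so $\sup_A\|P_A\|<\infty$ holds if and only if inequality \eqref{nier} holds, i.e. if and only if $\{x_n\}$ is an unconditional basis. Thus it suffices to relate $\sup_A\|P_A\|<\infty$ to normality and generation of $K_{\{x_n\}}$, and I would treat the two directions separately.

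For $(1)\Rightarrow(2)$ I would first upgrade \eqref{nier} to a multiplier bound: since $[-1,1]^N$ is the convex hull of the sign vectors, the diagonal operator $T_\lambda(\sum c_n x_n)=\sum\lambda_n c_n x_n$ satisfies $\|T_\lambda\|\le C_1$ whenever $\sup_n|\lambda_n|\le 1$. Generation is then immediate: for $y=\sum c_n x_n$ the subseries $u=\sum_{c_n\ge 0}c_n x_n$ and $v=\sum_{c_n<0}(-c_n)x_n$ converge (apply $T_\lambda$ with $\lambda_n\in\{0,1\}$), lie in the closed cone $K_{\{x_n\}}$, and give $y=u-v$. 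For normality, if $0\le_K x\le_K y$ then $0\le f_n(x)\le f_n(y)$ for all $n$; putting $\lambda_n=f_n(x)/f_n(y)$ (and $\lambda_n=0$ when $f_n(y)=0$) one has $\lambda_n\in[0,1]$ and $x=T_\lambda y$ by uniqueness of the expansion, whence $\|x\|\le C_1\|y\|$.

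The harder direction $(2)\Rightarrow(1)$ is where I would pass to the dual cone $K^*$ and to adjoints, so that the order structure becomes linear. One computes $P_A^*\phi=\sum_{n\in A}\phi(x_n)f_n$, and for $\phi\in K^*$ a check on the basis vectors gives $0\le_{K^*}P_A^*\phi\le_{K^*}\phi$. Since $K_{\{x_n\}}$ is \emph{generating} and $Y$ is complete, $K^*$ is normal in $Y^*$ (cone--dual-cone duality, \cite{peressini}), so $\|P_A^*\phi\|\le C^*\|\phi\|$ for every $\phi\in K^*$. To remove the restriction $\phi\in K^*$ I would invoke the \emph{normality} of $K_{\{x_n\}}$, which by the dual duality makes $K^*$ generating with norm control: every $\phi\in Y^*$ splits as $\phi=\phi_1-\phi_2$ with $\phi_i\in K^*$ and $\|\phi_1\|+\|\phi_2\|\le\gamma^*\|\phi\|$. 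Then $\|P_A^*\phi\|\le C^*\gamma^*\|\phi\|$ uniformly in $A$, so $\sup_A\|P_A\|=\sup_A\|P_A^*\|\le C^*\gamma^*<\infty$, and Theorem~\ref{al} yields that $\{x_n\}$ is unconditional.

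The main obstacle is the hard direction, and within it the two quantitative cone--dual-cone duality facts, namely ``$K$ generating $\Rightarrow K^*$ normal'' and ``$K$ normal $\Rightarrow K^*$ generating with norm control,'' together with the adjoint identity $P_A^*\phi=\sum_{n\in A}\phi(x_n)f_n$ and the order computation $0\le_{K^*}P_A^*\phi\le_{K^*}\phi$. These are exactly the points at which completeness of $Y$ and each of the two hypotheses enter, each used precisely once, which is why the theorem genuinely needs both normality and generation. By contrast $(1)\Rightarrow(2)$ is routine once \eqref{nier} is available, the only care being the convergence of the subseries and of $T_\lambda y$, which is supplied by the multiplier bound.
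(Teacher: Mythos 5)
The paper itself contains no proof of Theorem \ref{mc}: it is imported verbatim from Singer \cite{singer} (Theorem 16.3; see also the survey \cite{mcarthur}), so there is no internal argument to compare yours against. Judged on its own merits, your proposal is correct and is, in substance, the classical duality proof of this equivalence: unconditionality is recast via Theorem \ref{al} as uniform boundedness of the finite support projections $P_A$; the implication $(1)\Rightarrow(2)$ follows from the multiplier bound $\|T_\lambda\|\le C_1$ (generation from the splitting $y=u-v$ into the two subseries, normality from $x=T_\lambda y$ with $\lambda_n=f_n(x)/f_n(y)\in[0,1]$, which is legitimate because under (1) the expansion is unique); and $(2)\Rightarrow(1)$ is obtained by dualizing, combining the Grosberg--Krein theorem ($K$ normal $\Rightarrow$ $K^*$ generating with a norm-controlled decomposition) with And\^{o}'s theorem ($K$ closed and generating in a Banach space $\Rightarrow$ $K^*$ normal), applied to the order inequality $0\le_{K^*}P_A^*\phi\le_{K^*}\phi$.

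One step needs patching. In $(2)\Rightarrow(1)$ you invoke ``$K$ generating and $Y$ complete $\Rightarrow$ $K^*$ normal,'' but the theorem you are appealing to (And\^{o}'s; it is the one behind the duality statements in \cite{peressini} and \cite{aliprantis}) also requires $K$ to be \emph{closed}: its proof is a Baire-category/open-mapping argument producing a bounded decomposition $y=u-v$, $u,v\in K$, $\|u\|,\|v\|\le\rho\|y\|$, and without closedness the final iteration of that argument does not land in $K$. Fortunately the gap closes in one line from your own hypotheses: if $K_{\{x_n\}}$ is generating, then every $y\in Y$ is a difference of two convergent series with nonnegative coefficients, hence $y=\sum_n c_n x_n$ for some scalars, and biorthogonality forces $c_n=f_n(y)$; thus $\{x_n\}$ is already a Schauder basis and $K_{\{x_n\}}=\bigcap_n\{y\in Y:\ f_n(y)\ge 0\}$ is norm closed. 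Insert that observation before the appeal to the duality theorems and your proof is complete. Your closing remark that each cone hypothesis is consumed exactly once is accurate and sharp: the summing basis of $c_0$ (Gelbaum's conditional basis quoted in the paper) has a cone that is normal but not generating, while the cone generated by $x_n=-e_n+e_{n+1}$ in $c_0$ is generating but not normal, so neither hypothesis can be dropped.
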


\begin{example}
\label{exex}
Cone $K_{\{x_n\}}\subset c_0$, where 
$$
x_n=(0,0\dots,0,\underbrace{-1}_n, 1,0,0\dots), n=1,2\dots
$$
is generating pointed and not normal in $c_0$.
\end{example}

Now let us present some facts about cone isomorphisms.
\begin{definition}[\cite{casini, polyrakis}]
Let $X$ and $Y$ be normed spaces ordered by cones $P\subset X$ and $K\subset Y,$ respectively.
We say that $P$ is \emph{conically isomorphic} to  $K$ if there exists an additive, positively homogeneous, one-to-one map $i$ of $P$ onto $K$ such that $i$ and $i^{-1}$ are continuous in the induced topologies. Then we also say that $i$ is a conical isomorphism of $P$ onto $K$.

\end{definition}

\begin{proposition}
\label{roz}
Let $X$ be a linear space and let $Y, Z$ be Banach spaces. Let $P$ and $K$ be convex cones in $Z$ and $Y$, respectively.
Let function $F:X \rightarrow P$ be a $P$-convex. If there exists a conical isomorphism $i: P \rightarrow K$,  where cone $P$ is generating  in $Z$, then the function $\bar{F}: X \rightarrow K$, where 
$$
\bar{F}:=i \circ F
$$
is a $K$-convex function.
\end{proposition}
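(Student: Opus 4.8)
The plan is to verify the defining inequality of $K$-convexity directly, reducing it through the algebraic properties of the conical isomorphism $i$ to the $P$-convexity of $F$. Fix arbitrary $x,y\in X$ and $\lambda\in[0,1]$, and set
$$
w:=\lambda F(x)+(1-\lambda)F(y)-F(\lambda x+(1-\lambda)y).
$$
Since $F$ is $P$-convex, Definition \ref{Kwypuklosc} gives $w\in P$, so $i(w)$ is defined and lies in $K$. The whole argument then amounts to showing that the $K$-convexity defect of $\bar F$ at $(x,y,\lambda)$ equals $i(w)$.

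First I would rewrite $\lambda\bar F(x)+(1-\lambda)\bar F(y)$. Using positive homogeneity of $i$ (together with $\lambda,1-\lambda\ge 0$ and $F(x),F(y)\in P$) one has $\lambda\, i(F(x))=i(\lambda F(x))$ and $(1-\lambda)\,i(F(y))=i((1-\lambda)F(y))$; additivity of $i$ on $P$ then yields
$$
\lambda\bar F(x)+(1-\lambda)\bar F(y)=i\bigl(\lambda F(x)+(1-\lambda)F(y)\bigr).
$$
Now the crucial observation is that $\lambda F(x)+(1-\lambda)F(y)=w+F(\lambda x+(1-\lambda)y)$, a sum of two elements of $P$, so additivity of $i$ applies once more to give
$$
i\bigl(\lambda F(x)+(1-\lambda)F(y)\bigr)=i(w)+i\bigl(F(\lambda x+(1-\lambda)y)\bigr)=i(w)+\bar F(\lambda x+(1-\lambda)y).
$$
Subtracting $\bar F(\lambda x+(1-\lambda)y)$ from both sides leaves $i(w)\in K$, which is exactly the $K$-convexity inequality for $\bar F$, and since $x,y,\lambda$ were arbitrary this proves the proposition.

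I expect the only delicate point to be the application of additivity to what superficially looks like a difference: $i$ is defined only on $P$, so one cannot naively write $i(a-b)=i(a)-i(b)$. The remedy, and the place where $P$-convexity of $F$ is genuinely used, is to present the relevant vector as the sum $w+F(\lambda x+(1-\lambda)y)$ of two bona fide members of $P$ before invoking additivity. It is worth noting that the continuity of $i,i^{-1}$ and the hypothesis that $P$ is generating play no role in this particular computation---only additivity and positive homogeneity of $i$ are needed; those extra assumptions instead serve to let $i$ extend to a linear isomorphism of the ambient spaces, which matters for the surrounding theory rather than for the $K$-convexity conclusion itself.
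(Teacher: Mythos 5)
Your proof is correct, but it takes a genuinely different route from the paper's. The paper invokes Theorem 4.4 of \cite{casini}: since $P$ is generating in $Z$, the conical isomorphism $i$ extends to a map $G: Z \to K-K$, $G(x^1-x^2)=i(x^1)-i(x^2)$ for $x^1,x^2\in P$, which is shown to be linear; the $K$-convexity of $\bar F$ is then obtained by applying $G$ to the convexity defect of $F$ and expanding by linearity. You instead stay entirely inside the cone: the key identity $\lambda F(x)+(1-\lambda)F(y)=w+F(\lambda x+(1-\lambda)y)$ presents the vector to which $i$ must be applied as a sum of two elements of $P$, so additivity and positive homogeneity of $i$ on $P$ alone suffice. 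This is precisely the right way to handle the fact that $i$ is not defined on differences, and it is exactly the point at which the paper resorts to the linear extension $G$. The payoff of your argument is that it is more elementary and strictly more general: as you correctly note, neither the hypothesis that $P$ is generating nor the continuity of $i$ and $i^{-1}$ is used, so the conclusion holds for any additive, positively homogeneous bijection of $P$ onto $K$. The payoff of the paper's argument is structural rather than logical: the linear extension $G$ is a reusable device (and explains why the generating hypothesis appears in the statement of Proposition \ref{roz}), but for this particular conclusion it is dispensable.
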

\begin{proof}
By Theorem 4.4 of \cite{casini}, in view of the fact that $P$ is a generating cone in $Z$, the conical isomorphism $i: P\rightarrow K$ can be extended to the function  $G : P \rightarrow K-K$ defined as 
$$G(x)=i(x^1)-i(x^2), \mbox{ where } x=x^1-x^2, x^1, x^2\in P.$$  Function $G: Z \rightarrow K-K$ is linear. Indeed, let us take $x,y\in Z$, since $P$ is generating $x=x^1-x^2$, $y=y^1-y^2$, where $x^1,x^2,y^1,y^2 \in P.$
$$
\begin{array}{cc}
G(x+y)&=G(x^1-x^2+y^1-y^2)=i(x^1+y^1)-i(x^2+y^2)=\\
&=i(x^1)-i(x^2)+i(y^1)-i(y^2)=G(x)+G(y).
\end{array}
$$
Let us take $\lambda<0.$ We have
$$
G(\lambda x)= G(\lambda x^1- \lambda x^2)=i(-\lambda x^2)-(-\lambda x^1)= -\lambda i (x^2) + \lambda i(x^1) = \lambda G(x).
$$
For $\lambda \ge 0$ the calculations are analogous.

Now let us take $x_1, x_2 \in X$ and $\lambda \in [0,1].$
By Definition \ref{Kwypuklosc} and the linearity of $G,$
$$
\begin{array}{cl}
F(\lambda x_1 + (1-\lambda)x_2 ) \le_P \lambda F(x_1) + (1-\lambda) F(x_2)\ &  i.e. \\
\lambda F(x_1) + (1-\lambda) F(x_2) - F(\lambda x_1 + (1-\lambda)x_2 ) & \in P.\\
\end{array}
$$
By the definition of $G,$
$$
\begin{array}{ll}
G(\lambda F(x_1) + (1-\lambda) F(x_2) - F(\lambda x_1 + (1-\lambda)x_2 ) )=\\
i(\lambda F(x_1) + (1-\lambda) F(x_2) - F(\lambda x_1 + (1-\lambda)x_2 ) )\in K.\\
\end{array}
$$
Furthermore, 
$$
\begin{array}{ll}
G(\lambda F(x_1) + (1-\lambda) F(x_2) - F(\lambda x_1 + (1-\lambda)x_2 ) )=\\
\lambda G( F(x_1)) +  (1-\lambda)G( F(x_2)) -G( F(\lambda x_1 + (1-\lambda)x_2 ) )=\\
\lambda i( F(x_1)) +  (1-\lambda)i( F(x_2)) - i( F(\lambda x_1 + (1-\lambda)x_2 ) ) \ge_{K} 0.
\end{array}
$$
The latter inequality is equivalent to
$$
\bar{F}(\lambda x_1 + (1-\lambda) x_2) \le_K \lambda \bar{F}(x_1) + (1-\lambda) \bar{F}(x_2)
$$
which completes the proof.
\end{proof}
\begin{proposition}
\label{izomnorm}
Let  $K\subset Y$ be closed convex and normal  cone in a Banach space $Y$. If $i: Y \rightarrow Z$ is an isomorphism, then $i(K)$ is a closed convex and normal cone in $Z$.
\end{proposition}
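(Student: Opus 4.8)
The plan is to verify directly that $i(K)$ inherits each required property from $K$, exploiting that $i$ is a linear homeomorphism, and then to treat normality as the main point. The algebraic and topological properties are immediate, while normality requires keeping track of the isomorphism constants.

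First I would record that $i(K)$ is a convex cone. Since $i$ is linear, for $\lambda\ge 0$ we have $\lambda i(K)=i(\lambda K)\subset i(K)$ and $i(K)+i(K)=i(K+K)\subset i(K)$, so $i(K)$ is a cone; convexity follows in the same way from the convexity of $K$. For closedness I would use that $i^{-1}\colon Z\to Y$ is continuous (as $i$ is an isomorphism), whence
\[
i(K)=(i^{-1})^{-1}(K)
\]
is the preimage of the closed set $K$ under a continuous map, hence closed in $Z$.

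The heart of the proof is normality. Because $i$ is an isomorphism there are constants $0<m\le M$ with
\[
m\|x\|\le \|i(x)\|\le M\|x\| \qquad \text{for all } x\in Y,
\]
where one may take $M=\|i\|$ and $m=1/\|i^{-1}\|$. Now suppose $0\le_{i(K)} z_1\le_{i(K)} z_2$ in $Z$. The first relation says $z_1\in i(K)$, so $z_1=i(x_1)$ with $x_1\in K$; the second says $z_2-z_1\in i(K)$, so $z_2-z_1=i(w)$ with $w\in K$. Setting $x_2:=x_1+w$ and using linearity gives $z_2=i(x_2)$ with $x_2\in K$ and $x_2-x_1=w\in K$, that is $0\le_K x_1\le_K x_2$ in $Y$.

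Finally I would apply the normality of $K$: there is $C>0$ with $\|x_1\|\le C\|x_2\|$, and combining this with the two-sided estimate above yields
\[
\|z_1\|=\|i(x_1)\|\le M\|x_1\|\le MC\|x_2\|\le \frac{MC}{m}\|i(x_2)\|=\frac{MC}{m}\|z_2\|.
\]
Thus $i(K)$ is normal with constant $C'=MC/m$, completing the proof. I do not expect a serious obstacle here; the only points requiring care are translating the order relation $\le_{i(K)}$ in $Z$ back into $\le_K$ in $Y$ correctly, and transferring the inequality through the two-sided isomorphism bounds so that the constants land on the right sides.
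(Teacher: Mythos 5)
Your proof is correct, and it takes a somewhat different route from the paper's on the key point of normality. The paper argues via the convergence characterization of normal cones (the Peressini-type lemma it quotes): it takes sequences with $0\le_{i(K)} z^1_n\le_{i(K)} z^2_n$ and $z^2_n\to 0$, pulls the order relation back through $i^{-1}$ to get $0\le_K i^{-1}(z^1_n)\le_K i^{-1}(z^2_n)$, and then uses normality of $K$ plus continuity of $i$ and $i^{-1}$ to conclude $z^1_n\to 0$; closedness is dispatched by appeal to the Open Mapping Theorem. You instead work directly with the metric definition of normality stated in the paper ($0\le_K x\le_K y \Rightarrow \|x\|\le C\|y\|$), translating $0\le_{i(K)} z_1\le_{i(K)} z_2$ into $0\le_K x_1\le_K x_2$ by the decomposition $z_1=i(x_1)$, $z_2-z_1=i(w)$, $x_2=x_1+w$, and then chasing the two-sided isomorphism bounds. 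The pullback of the order relation is essentially the same maneuver in both proofs, but your treatment of normality buys an explicit constant, $C'=C\,\|i\|\,\|i^{-1}\|$, avoids invoking the equivalence between the metric and convergence formulations of normality (so it is self-contained relative to the definition the paper actually gives), and works verbatim in normed spaces without completeness; the paper's version is shorter and stays in the sequential language that the rest of its argument uses. Your closedness argument, $i(K)=(i^{-1})^{-1}(K)$ as a preimage of a closed set under the continuous map $i^{-1}$, is also cleaner than the paper's bare citation of the Open Mapping Theorem, and your explicit verification that $i(K)$ is a convex cone fills in a step the paper leaves implicit.
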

\begin{proof}
Let us take 
\begin{equation}
\label{eq1704}
0\le_{i(K)} z^1_n\le_{i(K)} z^2_n \ \ n=1,2,\dots
\end{equation}
for some sequences $z_n^1,z_n^2 \in Z$ and let $\lim\limits_{n\rightarrow \infty}z_n^2\rightarrow 0.$ We want to show that $\lim\limits_{n\rightarrow \infty}z^1_n\rightarrow 0.$ By \eqref{eq1704}, we have $z^2_n-z^1_n \in i(K).$ By assumption $i^{-1}: Z \rightarrow Y$ is continuous linear and onto i.e. we get,
$$
K \ni i^{-1} (z^2_n-z_n^1) = i^{-1} (z^2_n)- i^{-1}(z^1_n).
$$
It means that $i^{-1}(z^2_n)\ge_{K} i^{-1}(z^1_n).$
Since $z_2\rightarrow 0$ we get  $i^{-1}(z^2_n)\rightarrow 0.$ From the fact that $K$ is normal $\lim\limits_{n\rightarrow 0}i^{-1}(z^1_n)= 0\equiv \lim\limits_{n\rightarrow 0}z^1_n= 0.$
By Banach Open Mapping Theorem, cone $i(K)$ is closed.
\end{proof}
From Proposition \ref{izomnorm} and Example \ref{exex} we get the following corollary.
\begin{corollary}
Every Banach space isomorphic to $c_0$ contains a cone which is closed convex and generating but not normal.
\end{corollary}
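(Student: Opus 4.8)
The plan is to transport the non-normal cone of Example~\ref{exex} through the isomorphism and to argue that an isomorphism cannot manufacture normality out of non-normality. First I would fix an isomorphism $i\colon c_0\to Y$, which exists precisely because $Y$ is isomorphic to $c_0$, and I would let $K=K_{\{x_n\}}\subset c_0$ be the cone of Example~\ref{exex}, which is closed, convex, generating, pointed, and \emph{not} normal. I then set $\widetilde{K}:=i(K)\subset Y$ and claim that $\widetilde{K}$ is the desired cone in $Y$.

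The properties of being a closed convex generating cone transfer routinely and I would dispatch them in a single paragraph. Convexity and the two cone axioms ($\lambda\widetilde{K}\subset\widetilde{K}$ for $\lambda\ge 0$ and $\widetilde{K}+\widetilde{K}\subset\widetilde{K}$) are immediate from the linearity of $i$. Closedness follows from the fact that $i$ is a homeomorphism onto $Y$ (equivalently, via the Banach Open Mapping Theorem, exactly as in the proof of Proposition~\ref{izomnorm}), so the image of the closed set $K$ is closed. For the generating property, from $c_0=K-K$ and the surjectivity and linearity of $i$ I get $Y=i(c_0)=i(K-K)=i(K)-i(K)=\widetilde{K}-\widetilde{K}$, so $\widetilde{K}$ is generating in $Y$.

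The crucial and only nontrivial step is to show that $\widetilde{K}$ fails to be normal, and here I would argue by contradiction using Proposition~\ref{izomnorm} applied in the reverse direction. Suppose, to the contrary, that $\widetilde{K}=i(K)$ were normal in $Y$. The inverse map $i^{-1}\colon Y\to c_0$ is again an isomorphism, and $\widetilde{K}$ is closed, convex and (under our supposition) normal, so Proposition~\ref{izomnorm} applied to the isomorphism $i^{-1}$ and the cone $\widetilde{K}$ yields that $i^{-1}(\widetilde{K})=i^{-1}(i(K))=K$ is normal in $c_0$. This contradicts the conclusion of Example~\ref{exex} that $K$ is not normal; hence $\widetilde{K}$ is not normal, and the corollary follows. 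The subtle point worth stressing is that the hypotheses of Proposition~\ref{izomnorm} require the transported cone to be normal, which is exactly why the argument must be organized around applying the proposition to $i^{-1}$ and the (supposedly normal) cone $\widetilde{K}$, rather than attempting to push non-normality forward directly. I expect no computational obstacle: all properties other than non-normality are one-line consequences of linearity and the open mapping theorem, while non-normality is handled by this clean contrapositive use of Proposition~\ref{izomnorm}.
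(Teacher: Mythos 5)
Your proposal is correct and follows exactly the route the paper intends: the paper derives the corollary directly ``from Proposition~\ref{izomnorm} and Example~\ref{exex}'', i.e.\ transport the non-normal generating cone $K_{\{x_n\}}\subset c_0$ through the isomorphism and obtain non-normality of the image by the contrapositive of Proposition~\ref{izomnorm} applied to $i^{-1}$. Your write-up in fact makes explicit the details (cone axioms, closedness, generating, and the reverse application of the proposition) that the paper leaves implicit.
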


Let $J$ be a James space i.e. $J:=\{ x=(x_n)_{n\in \mathbb{N}} : \ \lim\limits_{n\rightarrow 0}x_n =0\}$ with the norm $\|x\|= \sup \{  (\sum\limits_{i=1}^n (x_{m_{2i-1}}- x_{m_{2i}})^2)^{\frac 1 2}:   0=m_0<m_1<\dots<m_{n+1}\}<\infty.$
An interesting result is the fact that James space does not contain an isomorphic copy of $c_0$ or $l_1$. 

In \cite{bessaga} Pe\l czy\'nski and Bessaga  proved the following theorem.
\begin{theorem}[Theorem 6.4 of \cite{mcarthur, bessaga}]
A separable Banach space having the space $J$ of James as a subspace (e.g. $C[0,1]$) does not have an unconditional basis.
\end{theorem}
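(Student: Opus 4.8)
The plan is to argue by contradiction: suppose $Y$ is separable, contains a subspace $V$ isomorphic to James' space $J$, and nevertheless admits an unconditional basis $\{x_n\}$ with unconditionality constant $C_1$ as in Theorem \ref{al}. My goal is to manufacture an isomorphic copy of $c_0$ sitting \emph{inside} $V$, which is impossible because, as recalled just before the statement, $J$ contains no copy of $c_0$. First I would record the two structural facts about $J$ that drive everything: the summing sequence $s_n=\sum_{i=1}^n e_i$ is norm-bounded ($\|s_n\|_J=1$), weak-Cauchy, but not weakly convergent, so $J$ is quasi-reflexive (its canonical image has codimension one in $J^{**}$) and in particular is not weakly sequentially complete, while still avoiding $c_0$ and $l_1$. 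Transporting $\{s_n\}$ through $V\cong J$ yields a bounded weak-Cauchy sequence $\{z_m\}\subset V$ whose weak$^*$ limit $z^{**}\in V^{\perp\perp}\subset Y^{**}$ lies outside $Y$.

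Next I would feed this datum into the unconditional basis. Writing $P_N$ for the basis projections of $Y$ and $a_n:=x_n^*(z^{**})$, uniform boundedness of $P_N^{**}z^{**}$ gives $\sup_N\|\sum_{n\le N}a_nx_n\|<\infty$, whereas $z^{**}\notin Y$ forces $\sum a_nx_n$ to diverge in norm. By Theorem \ref{al} every sign-variant $\sum_{n\le N}\varepsilon_na_nx_n$ is then bounded by $C_1\sup_N\|\sum_{n\le N}a_nx_n\|$, so, grouping the tail into blocks $w_k=\sum_{n\in B_k}a_nx_n$ chosen with $\|w_k\|\ge\delta>0$, the block sequence $\{w_k\}$ is seminormalized and unconditional. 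Using the basis projections (each of norm $\le C_1$) one checks that $\xi\mapsto\sum_k\xi_kw_k$ is an isomorphism of $c_0$ onto $\overline{\mathrm{span}}\{w_k\}$; that is, an unconditionally bounded, non-convergent series must span a copy of $c_0$. This is exactly the mechanism behind the Bessaga--Pe\l czy\'nski $c_0$-theorem, and it dovetails with Theorem \ref{dlakrat}: were $c_0$ absent, $Y$ would be a weakly sequentially complete Banach lattice under the coordinatewise order induced by $\{x_n\}$ (Theorem \ref{mc}), forcing its subspace $V\cong J$ to be weakly sequentially complete too, contradicting the first paragraph.

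The decisive and hardest step is to relocate this $c_0$ \emph{inside} $V$. The blocks $w_k=P_{B_k}^{**}z^{**}$ are built from $z^{**}$, which is only the weak$^*$ limit of the genuine increments $z_{q_{k+1}}-z_{q_k}\in V$, and weak$^*$ proximity does not supply the norm control a gliding-hump argument requires. Here I would exploit that the defect $\dim V^{**}/V=1$ is minimal: choosing the indices $q_k$ and the blocks $B_k$ simultaneously — a Bessaga--Pe\l czy\'nski selection applied to the seminormalized weakly null increments — one matches each $w_k$ with a true increment of $\{z_m\}$ up to a norm-summable error, and the standard perturbation principle for basic sequences transfers the $c_0$-isomorphism onto $\overline{\mathrm{span}}\{z_{q_{k+1}}-z_{q_k}\}\subset V$. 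This yields $c_0\hookrightarrow V\cong J$, the desired contradiction.

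I expect precisely this matching/perturbation step to be the genuine obstacle: one must reconcile the unconditional (hence $c_0$-producing) block structure coming from the ambient basis $\{x_n\}$ with the conditional, quasi-reflexive geometry internal to $J$, where norms of block sums grow instead of staying bounded. The lattice dichotomy and the $c_0$-extraction of the first two paragraphs are comparatively routine consequences of Theorems \ref{dlakrat}, \ref{al} and \ref{mc}; it is the quantitative control of the error terms, made possible by the one-dimensional quasi-reflexive defect of $J$, that carries the proof.
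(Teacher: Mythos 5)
Your proposal is not a proof as it stands; it has one repairable gap and one fatal one. (For context: the paper does not prove this statement at all, it only cites Bessaga--Pe\l czy\'nski and McArthur, so your attempt has to be measured against the classical arguments.) The repairable gap is the claim that $z^{**}\notin Y$ forces $\sum_n a_nx_n$ to diverge in norm. This inference is valid only when the basis $\{x_n\}$ is shrinking: if $\sum_n a_nx_n$ converges to some $y\in Y$, you can only conclude that $z^{**}-y$ annihilates every coordinate functional $x_n^*$, and this forces $z^{**}=y$ only when $\overline{\mathrm{span}}\{x_n^*\}=Y^*$. By James's theorem, non-shrinking unconditional bases occur exactly when $l_1\subset Y$, which your hypotheses do not exclude (a concrete illustration of the failed inference: a Banach limit, viewed in $l_1^{**}\setminus l_1$, has all coordinates $a_n=0$ with respect to the unit vector basis of $l_1$). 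A correct proof must therefore split into two cases, the convergent case being closed off by manufacturing a copy of $l_1$ (not $c_0$) in $V$ from the sequence $z_m-y$, whose coordinates tend to $0$ although it is not weakly null, and then invoking $l_1\not\hookrightarrow J$.

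The fatal gap is the matching step, which you correctly identify as the crux but then dispose of by appeal to ``Bessaga--Pe\l czy\'nski selection plus the standard perturbation principle.'' No such matching can exist, for a quantitative reason. Your blocks $w_k$ satisfy $M:=\sup_{n,\varepsilon}\|\sum_{k\le n}\varepsilon_kw_k\|<\infty$ (this boundedness of \emph{sign-variant} partial sums, coming from \eqref{nier} and the boundedness of the partial sums of $\sum a_nx_n$, is exactly what makes them $c_0$-generating). If genuine increments $u_k=z_{q_{k+1}}-z_{q_k}\in V$ satisfied $\sum_k\|u_k-w_k\|<\infty$, then $\|\sum_{k\le n}(-1)^ku_k\|\le M+\sum_k\|u_k-w_k\|$ would be bounded uniformly in $n$. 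But under the isomorphism the $u_k$ correspond to the interval indicators $e_{q_k+1}+\dots+e_{q_{k+1}}$ in $J$, and choosing one coordinate from each interval in the defining supremum of the James norm gives $\|\sum_{k\le n}(-1)^k(e_{q_k+1}+\dots+e_{q_{k+1}})\|_J\ge\sqrt{n}$; this $\sqrt{n}$ growth is the same computation that shows summing-type sequences are conditional. So sign-variant sums of true increments are unbounded, and a norm-summable matching is impossible. This also explains why the selection principle does not rescue you: applied honestly to the weakly null increments it first passes to a \emph{subsequence}, which destroys the telescoping identity $\sum_{k\le n}u_k=z_{q_{n+1}}-z_{q_1}$; the blocks it then produces are adapted to that subsequence and have \emph{unbounded} partial sums (in $J$ the corresponding spans are copies of $l_2$ or of $J$, never of $c_0$), so they are not your $w_k$. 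You need telescoping and block-matching simultaneously, and the above shows these are incompatible; asserting their compatibility is asserting the theorem itself, so the argument is circular, and the one-dimensional quasi-reflexive defect of $J$ plays no role in resolving this. The known proofs get around precisely this obstruction with different machinery: Rosenthal \cite{rosc0} works with (s.s.)-subsequences and \emph{convex block} subsequences of the weak-Cauchy sequence (convex blocks stay in $V$ and their consecutive differences retain bounded, telescoping partial sums), while the original Bessaga--Pe\l czy\'nski/James route goes through the shrinking and boundedly complete dichotomy for unconditional bases.
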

It is easy to find not normal cones in infinite dimensional Banach spaces (see e.g. \cite{singer}).

\section{Useful Constructions of convex functions}
In this section we recall a constructions of some convex functions introduced in \cite{lesniewski}.
Let us start with the following lemma.
\begin{lemma}[Lemma 4.2 of \cite{lesniewski}]
\label{lemat1}
Let  $\{a_{m}\},\{t_m\}\subset\textbf{R}$ be sequences with $\{t_m\}$ decreasing. 
Function $g:\textbf{R}\rightarrow \textbf{R}$ defined as
$$
g(r):=\sup_m f_m(r), 
$$
where 
$$f_m(x):=
a_{m}+\frac{x-t_m}{t_{m+1}-t_{m}}(a_{m+1}-a_{m}) \ \ \mbox{ for } x\in \textbf{R}.
$$
is convex and 
$$
 g(t_{m})=a_{m}.
 $$
 if and only if 
\begin{equation}
\label{nierwonoscwaznaa}
\frac{a_{m+1}-a_{m}}{t_{m+1}-t_{m}} \ge \frac{a_{m+2}-a_{m+1}}{t_{m+2}-t_{m+1}} \ \ \ \mbox{ for } m\in\textbf{N}.
\end{equation}
\end{lemma}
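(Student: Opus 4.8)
The plan is to exploit the fact that each $f_m$ is an affine function, namely the line through the two points $(t_m,a_m)$ and $(t_{m+1},a_{m+1})$ (indeed $f_m(t_m)=a_m$ and $f_m(t_{m+1})=a_{m+1}$), whose slope I abbreviate by $s_m:=\frac{a_{m+1}-a_m}{t_{m+1}-t_m}$. Since a pointwise supremum of affine functions is automatically convex, the convexity of $g$ carries no information; the whole content of the lemma is the equivalence between the interpolation identities $g(t_m)=a_m$ (for every $m$) and the slope inequalities \eqref{nierwonoscwaznaa}, which read $s_m\ge s_{m+1}$ for all $m$. The single computation on which everything rests is that two neighbouring lines agree at their common node: since $f_m(t_{m+1})=a_{m+1}=f_{m+1}(t_{m+1})$ one has the identity $f_{m+1}(x)-f_m(x)=(s_{m+1}-s_m)(x-t_{m+1})$, and likewise, comparing at the node $t_m$, $f_m(x)-f_{m-1}(x)=(s_m-s_{m-1})(x-t_m)$.

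To prove that \eqref{nierwonoscwaznaa} forces $g(t_k)=a_k$, I fix $k$ and show $f_m(t_k)\le a_k$ for every $m$, the value $a_k$ being attained already by $f_k$ (and by $f_{k-1}$). For indices $m<k$ the node $t_k$ lies to the left of $t_{m+1}$, i.e. $t_k\le t_{m+1}$ because $\{t_m\}$ is decreasing; combined with $s_{m+1}-s_m\le 0$ the first identity gives $f_{m+1}(t_k)\ge f_m(t_k)$, and chaining these inequalities upward from $m$ to $k-1$ yields $f_m(t_k)\le f_{k-1}(t_k)=a_k$. For indices $m>k$ the node $t_k$ lies to the right of $t_m$, i.e. $t_k\ge t_m$, and the second identity together with $s_m-s_{m-1}\le 0$ gives $f_m(t_k)\le f_{m-1}(t_k)$; chaining downward from $m$ to $k$ gives $f_m(t_k)\le f_k(t_k)=a_k$. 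Taking the supremum over $m$ proves $g(t_k)=a_k$, and $g$ is convex as already noted.

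For the converse I argue by contraposition: if \eqref{nierwonoscwaznaa} fails at some index $m_0$, so that $s_{m_0}<s_{m_0+1}$, I evaluate the line $f_{m_0+1}$ at the node $t_{m_0}$. Using $a_{m_0}-a_{m_0+1}=s_{m_0}(t_{m_0}-t_{m_0+1})$ one finds $f_{m_0+1}(t_{m_0})-a_{m_0}=(t_{m_0}-t_{m_0+1})(s_{m_0+1}-s_{m_0})$, which is strictly positive because $t_{m_0}-t_{m_0+1}>0$ (as $\{t_m\}$ is decreasing) and $s_{m_0+1}-s_{m_0}>0$. Hence $g(t_{m_0})\ge f_{m_0+1}(t_{m_0})>a_{m_0}$, so the interpolation identity fails. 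I expect the only delicate point to be the bookkeeping of signs in the chaining argument of the sufficiency direction: because $\{t_m\}$ is decreasing, ``later'' indices correspond to points further to the left, so one must consistently track on which side of the common node $t_k$ sits in order to apply the correct neighbouring identity; once the two cases $m<k$ and $m>k$ are separated, each step reduces to a one-line sign check.
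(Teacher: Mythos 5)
Your proof is correct and follows essentially the same route as the paper: the sufficiency direction is the same chaining of neighbouring-line comparisons at the common nodes (which the paper delegates to a citation of \cite{lesniewski}, where it is carried out via the inequalities $f_{m+1}\le f_m$ for $x\ge t_{m+1}$ and $f_{m+1}\ge f_m$ for $x\le t_{m+1}$), and your necessity argument --- evaluating $f_{m_0+1}$ at $t_{m_0}$ to contradict $g(t_{m_0})=a_{m_0}$ --- is exactly the paper's. Your slope identity $f_{m+1}(x)-f_m(x)=(s_{m+1}-s_m)(x-t_{m+1})$ is just a cleaner packaging of the same computation.
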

\begin{proof}
$\Leftarrow$ this proof has beed presented in \cite{lesniewski}.

$\Rightarrow$ Let us assume that 
$$
\frac{a_{m+1}-a_{m}}{t_{m+1}-t_{m}}< \frac{a_{m+2}-a_{m+1}}{t_{m+2}-t_{m+1}} \ \ \ \mbox{ for some } m\in\textbf{N}.
$$
We get
$$
(a_{m+1}-a_{m})\left(1- \frac{x-t_m}{t_{m+1}-t_{m}}\right)+ \frac{x-t_{m+1}}{t_{m+2}-t_{m+1}}(a_{m+2}-a_{m+1})>0
$$
and
$f_{m+1}(x)>f_m(x)$ for $x\ge t_{m+1}.$ Let us take $x=t_m\ge t_{m+1}$ we get
$$
f_{m+1}(t_m) > f_m(t_m)=a_m, 
$$
which is contrary to $g(t_m)=a_m.$
\end{proof}

Let $Y$ be a Banach space and $\{y_i\}$ be an arbitrary sequence of elements of $Y$. Let $\{t_i\}$ be a sequence of positive reals tending to zero. 

Let $\bar{F}: X_h:= \{x\in X : x=\beta h, \beta \ge 0\} \rightarrow Y $ be a function defined as in  in \cite{lesniewski}, i.e. for $r>0$ 
\begin{equation}
\label{et1}
\bar{F}(rh) := \sum\limits_{i=1}^\infty \bar{F}_i(rh),
\end{equation}
where 
$\bar{F}_i: \{x\in X : x=\beta h, \beta \ge 0\} \rightarrow Y$ is defined as
$$\bar{F}_1(rh):=\begin{cases}
y_{1}t_{1}+\frac{r-t_{1}}{t_{2}-t_{1}}(y_{2}t_{2}-y_{1}t_{1})&  \ t_{2}< r\\
0 & \  r <t_{2}\end{cases}$$
and for $i\ge 2$
$$\bar{F}_i(rh):=\begin{cases}
y_{i}t_{i}+\frac{r-t_{i}}{t_{i+1}-t_{i}}(y_{i+1}t_{i+1}-y_{i}t_{i})&  \ t_{i+1}< r\le t_{i}\\
0 & \  r \notin (t_{i+1}, t_{i}].\end{cases}.$$
Observe that for $r=t_k$ we have $\bar{F}(t_kh)=\bar{F}_k(t_kh)=y_kt_k.$

The following proposition  is a simple consequence of Lemma \ref{lemat1}.
\begin{proposition}
\label{prop1}
Let $K\subset Y$ be a closed convex cone with the dual $K^*\subset Y^*.$ Let $\{y_i\}\subset Y$ be a sequence in $Y$. The function $\bar{F}$ defined by \eqref{et1}  with $t_k = \frac 1 k$, $k=1,2,\dots,$ is $K$-convex on $X_h$ if and only if 
\begin{equation}
\label{glen}
y^*(2y_{k+1}- y_k-y_{k+2}) \le 0 \ \ \mbox{ for all } y^*\in K^*,\  k=1,2,\dots\ .
\end{equation}
\end{proposition}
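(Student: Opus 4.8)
The plan is to reduce the $K$-convexity of $\bar{F}$ to a family of scalar convexity conditions by means of Lemma \ref{lemma-convex}, to recognize each scalar function as the interpolation handled by Lemma \ref{lemat1}, and then to perform the algebraic simplification that turns the slope inequality \eqref{nierwonoscwaznaa} into \eqref{glen}.

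First I would fix an arbitrary $y^*\in K^*$ and study the real function $\varphi(r):=y^*(\bar{F}(rh))$ for $r\ge 0$. Because $y^*$ is linear and, by construction, $\bar{F}$ restricted to each interval $(t_{k+1},t_k]$ is the affine interpolation between $y_kt_k$ and $y_{k+1}t_{k+1}$ (and is a single affine function on $(t_2,\infty)$ via the $\bar{F}_1$ piece), the function $\varphi$ is exactly the continuous piecewise-linear interpolation of the data $(t_m,a_m)$ with $a_m:=t_m\,y^*(y_m)$. In the notation of Lemma \ref{lemat1} this is the function $g=\sup_m f_m$ whenever it is convex, so that \emph{$\varphi$ is convex if and only if the slope condition \eqref{nierwonoscwaznaa} holds} for the data $\{a_m\}$.

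Then, since $K$ is closed and convex, Lemma \ref{lemma-convex} gives that $\bar{F}$ is $K$-convex on $X_h$ if and only if $\varphi$ is convex for every $y^*\in K^*$. Combining this with the previous step, $\bar{F}$ is $K$-convex iff, for all $y^*\in K^*$ and all $m$,
$$\frac{a_{m+1}-a_m}{t_{m+1}-t_m}\ge \frac{a_{m+2}-a_{m+1}}{t_{m+2}-t_{m+1}}, \qquad a_m=t_m\,y^*(y_m).$$
Finally I would substitute $t_m=\tfrac1m$ and abbreviate $b_m:=y^*(y_m)$. Using $t_{m+1}-t_m=-\tfrac{1}{m(m+1)}$, each difference quotient simplifies (after the sign change caused by $t_{m+1}-t_m<0$) to $(m+1)b_m-mb_{m+1}$ and $(m+2)b_{m+1}-(m+1)b_{m+2}$ respectively, so the inequality becomes $(m+1)\bigl(b_m-2b_{m+1}+b_{m+2}\bigr)\ge 0$, i.e. $y^*(y_m-2y_{m+1}+y_{m+2})\ge 0$, which is precisely \eqref{glen} with $k=m$.

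The step needing the most care is the identification of $\varphi$ with the function $g$ of Lemma \ref{lemat1}: one must check that $\varphi$ really is the affine interpolation on each $(t_{k+1},t_k]$ and that the affine tail on $(t_2,\infty)$ coming from $\bar{F}_1$ creates no additional break at $t_1$, so that the convexity constraints occur exactly at the breakpoints $t_2,t_3,\dots$, matching $k=1,2,\dots$ in \eqref{glen}. The concluding computation is routine, the only subtlety being the bookkeeping of the sign reversal from the decreasing sequence $\{t_m\}$.
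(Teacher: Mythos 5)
Your proof is correct and follows essentially the same route as the paper's: both reduce $K$-convexity to convexity of the scalar functions $y^*(\bar{F}(\cdot\,h))$ for $y^*\in K^*$ (Lemma \ref{lemma-convex}), identify these with the piecewise-linear interpolation of the data $a_m=t_m\,y^*(y_m)$ handled by Lemma \ref{lemat1}, and perform the same algebraic reduction of \eqref{nierwonoscwaznaa} to \eqref{glen}. The only difference is presentational: you make explicit the appeal to Lemma \ref{lemma-convex} and the identification of $y^*(\bar{F}(\cdot\,h))$ with the function $g$ of Lemma \ref{lemat1}, both of which the paper's own proof leaves implicit.
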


\begin{proof}
By Lemma \ref{lemat1},
the function $\bar{F}$ is $K$-convex on $X_h$ if and only if inequality \eqref{nierwonoscwaznaa} holds, i.e.
 $$
 \frac{a_{k+1}-a_{k}}{t_{k+1}-t_{k}} \ge \frac{a_{k+2}-a_{k+1}}{t_{k+2}-t_{k+1}},
 $$

where  $ t_k=\frac 1 k, a_k:= y^*(\bar{F}(t_kh))$  and $ y^*\in K^*$.  We get 
  \begin{align*}
 \frac{\frac{1}{k+2}y^*(y_{k+2})-\frac{1}{k+1} y^*(y_{k+1}) }{\frac{1}{k+2}-\frac{1}{k+1}} &\le \frac{
 \frac{1}{k+1}y^*(y_{k+1})-\frac 1 k  y^*(y_{k}) }{\frac{1}{k+1}-\frac{1}{k}} & \equiv\\
 k y^*(y_{k+1})-(k+1)  y^*(y_{k}) &\le  (k+1)  y^*(y_{k+2}) -(k+2) y^*(y_{k+1}) & \equiv \\
 (k+1) y^*(2y_{k+1}-y_k-y_{k+2}) & \le 0,\  k=1,2,\dots  & \equiv\\
   y^*(2y_{k+1}-y_k-y_{k+2}) & \le 0,\   k=1,2,\dots \ .
 \end{align*}
 \end{proof}
  \begin{proposition}
  \label{bies}
  Let $\{y_i\}\subset Y$ be a sequence in a Banach space $Y$.
  If $\{b_k\}\subset Y$ defined as
  $$
2y_{k+1}- y_k-y_{k+2} =:b_k,\  k=1,2,\dots\ ,
$$
 forms an unconditional basis in $Y$, the function $\bar{F}: X\rightarrow Y$ defined by \eqref{et1}  with $t_k=\frac 1 k, k=1,2,\dots ,$ is $(-K_{\{b_k\}})$-convex, where
$$
K_{\{b_k\}}:= \{y\in Y : y=\sum\limits_{i=1}^\infty a_i b_i, a_i \ge 0,\  i=1,2,\dots\}
$$

is a closed generating and normal cone in $Y$.

  \end{proposition}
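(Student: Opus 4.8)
The plan is to reduce everything to two facts already in hand: the dual-cone criterion for $K$-convexity in Proposition \ref{prop1}, and the characterization of normal generating cones through unconditional bases in Theorem \ref{mc}. The observation that drives the whole argument is that the second differences $b_k = 2y_{k+1} - y_k - y_{k+2}$ appearing in \eqref{glen} are exactly the generators of the cone, so taking the cone to be $-K_{\{b_k\}}$ makes the required inequality automatic.

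First I would dispose of the cone properties, which cost essentially nothing. Since $\{b_k\}$ is a basis of $Y$ it is complete and carries biorthogonal functionals $\{b_k^*\}$, so $\{b_k, b_k^*\}$ is a complete biorthogonal system; by hypothesis $\{b_k\}$ is unconditional. Theorem \ref{mc} then applies directly and yields that $K_{\{b_k\}}$ is normal and generating, while closedness and convexity of $K_{\{b_k\}}$ are the general properties of a cone associated to a basis recorded earlier in the excerpt. This settles the final clause of the statement.

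Then I would establish $(-K_{\{b_k\}})$-convexity by invoking Proposition \ref{prop1} with the cone $K := -K_{\{b_k\}}$. That proposition says $\bar{F}$ is $K$-convex on $X_h$ precisely when $y^*(2y_{k+1} - y_k - y_{k+2}) \le 0$ for every $y^* \in K^*$ and every $k$, i.e. $y^*(b_k) \le 0$ for all $y^* \in (-K_{\{b_k\}})^*$. The key computation is the elementary identity $(-K_{\{b_k\}})^* = -K_{\{b_k\}}^*$, which follows because $y^*$ is nonnegative on $-K_{\{b_k\}}$ if and only if $-y^*$ is nonnegative on $K_{\{b_k\}}$. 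Hence for $y^* \in (-K_{\{b_k\}})^*$ I have $-y^* \in K_{\{b_k\}}^*$, and since each generator $b_k$ lies in $K_{\{b_k\}}$ (choose the coefficient sequence $a_k = 1$, $a_i = 0$ for $i \ne k$), this gives $(-y^*)(b_k) \ge 0$, that is $y^*(b_k) \le 0$. This is exactly \eqref{glen} for $K = -K_{\{b_k\}}$, so $\bar{F}$ is $(-K_{\{b_k\}})$-convex.

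I do not expect a serious obstacle. The only point demanding care is the bookkeeping of signs when passing from $K_{\{b_k\}}$ to its negative and to the dual cone, since getting the direction of the inequality right is the crux of the argument. A secondary verification is that Proposition \ref{prop1} genuinely applies, which requires $-K_{\{b_k\}}$ to be a closed convex cone with a well-defined dual; closedness and convexity are inherited from $K_{\{b_k\}}$, already noted in the first step.
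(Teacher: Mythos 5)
Your proof is correct and follows essentially the same route as the paper: invoke the unconditional-basis characterization (Theorem \ref{mc}) to get that $K_{\{b_k\}}$ is normal and generating, then reduce $(-K_{\{b_k\}})$-convexity via Proposition \ref{prop1} to the inequality $y^*(b_k)\le 0$ for all $y^*\in(-K_{\{b_k\}})^*$. If anything, your verification of that inequality---directly from $b_k\in K_{\{b_k\}}$ together with the identity $(-K)^*=-K^*$---is cleaner than the paper's, which asserts the identification of the dual cone with $-K_{\{b_k^*\}}$ through the biorthogonal system (a claim requiring more justification in general), whereas your argument sidesteps this entirely; you also cite \ref{mc} where the paper points to \ref{al}, which appears to be a mis-citation on the paper's side.
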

\begin{proof}
In view of Proposition \ref{al}, the cone
$$
K_{\{b_k\}}=\{ y\in Y: y=\sum\limits_{i=1}^{\infty}  \alpha_i b_i \in Y \ : \alpha_i \ge 0 , i=1,2,\dots  \}
$$ 
is normal and generating in $Y.$
Let us observe that dual cone is defined as
$$
K^*:=-K_{\{b_i^*\}},
$$
where $\{b_i,b_i^*\}$ is the biorthogonal system.
Inequality \eqref{glen} is satisfied because 
$$
y^*(b_k)= -\alpha_k \le 0
$$
for some $\alpha_k \in \mathbb{R}.$
By Proposition \ref{prop1}, the function $\bar{F}$ is $(-K_{\{b_k\}})$-convex.
\end{proof}

\section{Main result}
Now we are ready to formulate our main result.
\begin{theorem}
\label{twierdzenie}
Let $X$ be a linear space and $Y$ be a Banach space. If for every  closed convex and normal cone $K\subset Y$ and 
 for every $K$-convex function $F:X \rightarrow Y$ there exist directional derivatives for every $h\in X$ at $x_0=0,$ then there is no subspace in Y isomorphic to $c_0.$
\end{theorem}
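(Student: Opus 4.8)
The plan is to argue by contraposition: assuming $Y$ contains a subspace $W$ isomorphic to $c_0$, I will manufacture a closed convex normal cone $K\subset Y$ together with a $K$-convex function whose directional derivative at $0$ fails to exist, contradicting the hypothesis. Fix a linear isomorphism $i\colon c_0\to W\subset Y$ onto the (necessarily closed) subspace $W=i(c_0)$, and fix any direction $h\in X\setminus\{0\}$. The whole construction is carried out first inside $c_0$ and then pushed forward through $i$.

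Inside $c_0$ I would take the unconditional basis $b_k:=\frac1k e_k$, $k=1,2,\dots$ (its unconditionality is the content of the relevant example), and note that the associated cone is $K_{\{b_k\}}=c_0^+$. The key step is to invert the recurrence of Proposition~\ref{bies}: I solve the second-difference equation $2y_{k+1}-y_k-y_{k+2}=b_k$ for a sequence $\{y_k\}\subset c_0$. Double summation produces an explicit solution in which each $y_k$ has finite support (so $y_k\in c_0$ and, for the choice $y_1=y_2=0$, the coordinates are nonpositive so that $y_k\in -c_0^+$), and one reads off that $\{y_k\}$ does not converge. With this $\{y_k\}$, Proposition~\ref{bies} (through Proposition~\ref{prop1} and Lemma~\ref{lemat1}) guarantees that the interpolating function $\bar F$ of \eqref{et1}, with $t_k=\frac1k$, is $(-c_0^+)$-convex on the ray $X_h$, takes its values in $-c_0^+$, and that $-c_0^+$ is a closed, convex, generating and normal cone.

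Next I would check that $\bar F$ has no directional derivative at $0$ along $h$. Since $\bar F(t_kh)=y_kt_k$, the difference quotients along $t_k=\frac1k\downarrow 0$ equal $\bigl(\bar F(t_kh)-\bar F(0)\bigr)/t_k=y_k-k\,\bar F(0)$; computing coordinates shows this sequence cannot converge in the norm of $c_0$ for any choice of $\bar F(0)\in c_0$ — for a generic $\bar F(0)$ it already diverges coordinatewise, while for the borderline value $\bar F(0)=-\sum_j\frac1j e_j$ (the one compatible with $(-c_0^+)$-convexity at the endpoint $0$) it converges coordinatewise to a vector lying outside $c_0$. Either way $\bar F'(0;h)$ does not exist. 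Finally I transport everything into $Y$: by Proposition~\ref{izomnorm} the image $K:=i(-c_0^+)$ is a closed convex normal cone in $W$, hence in $Y$ (the order and the relative norm are inherited unchanged); and by Proposition~\ref{roz} — applicable since $-c_0^+$ is generating in $c_0$ and $\bar F$ maps into $-c_0^+$ — the composition $\hat F:=i\circ\bar F$ is $K$-convex. Because $i$ is a homeomorphism onto its image, the difference quotients for $\hat F$ converge if and only if those for $\bar F$ do, and they do not, so $\hat F'(0;h)$ does not exist. This exhibits a closed convex normal cone $K\subset Y$ and a $K$-convex function violating the hypothesis, which completes the contrapositive.

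The step I expect to be most delicate is the interplay between cone-convexity at the endpoint $r=0$, the admissible value of $\bar F(0)$, and the non-existence of the derivative: the very feature that forces the difference quotients out of $c_0$ (their coordinatewise limit escaping $c_0$) is what both prevents convergence and obstructs any naive extension of $\bar F$ to negative parameters, so the construction must stay on the ray $X_h$ (extending to all of $X$ as in \cite{lesniewski} if the full domain is insisted upon). A secondary point requiring care is verifying that normality, closedness, and the generating property all survive the conical isomorphism $i$, which is exactly what Propositions~\ref{izomnorm} and~\ref{roz} are designed to supply.
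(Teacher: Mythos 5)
Your proposal is correct once the endpoint repair you describe is actually carried out, and it follows the paper's architecture exactly: an isomorphic copy of $c_0$, the interpolant \eqref{et1} with $t_k=\frac1k$, Proposition \ref{bies} with the unconditional basis $b_k=\frac1k e_k$ (so that $K_{\{b_k\}}=c_0^+$), and transport to $Y$ by Propositions \ref{roz} and \ref{izomnorm}. What is genuinely different is your witness sequence, and the difference matters. The paper takes $y_1=0$ and $y_2=(1,\tfrac12,\tfrac13,\dots)$, so that $y_k=(1,\dots,1,\tfrac{k-1}{k},\tfrac{k-1}{k+1},\dots)$ is bounded, weak Cauchy and not weakly convergent; since then $y^*(\bar F(t_kh))=t_ky^*(y_k)\to0$ for every $y^*\in K^*$, the function defined literally by \eqref{et1} (hence with $\bar F(0)=0$) is $K$-convex on the whole closed ray with no extra work, and the difference quotients are just $y_k$. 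Your choice $y_1=y_2=0$ gives the finitely supported but unbounded $y_k=-\sum_{i=1}^{k-2}\frac{k-i-1}{i}e_i$, and then $\lim_{r\downarrow0}y^*(\bar F(rh))=-\sum_i y^*_i/i>0$ for $0\neq y^*\in K^*=-l_1^+$, whereas \eqref{et1} forces $y^*(\bar F(0))=0$; since convexity at an endpoint requires the value there to dominate the one-sided limit, the unmodified construction is \emph{not} $K$-convex up to $r=0$. So your redefinition $\bar F(0):=-\sum_j\frac1j e_j$ is not merely a ``delicate point'' but an indispensable repair; it also exposes that the ``if'' direction of Proposition \ref{prop1}, read literally on the closed ray $X_h$, tacitly assumes $y^*(\bar F(t_kh))\to0$ — an assumption the paper's data satisfy and yours do not. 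After the repair your argument is sound: the quotients $y_k-k\bar F(0)$ have $i$-th coordinate $\frac{i+1}{i}$ for $i\le k-2$, hence converge coordinatewise to a vector outside $c_0$ and cannot converge in norm, and the isomorphism preserves non-convergence. As to what each route buys: the paper's initial data eliminate all endpoint issues and produce a genuine non-trivial weak-Cauchy sequence (the natural object if one has Rosenthal's theorem in mind), while yours keeps $y_k$ explicitly inside the cone $-c_0^+$, so Proposition \ref{roz} applies verbatim (the paper's own $\bar F$ takes values in $+c_0^+$, which is harmless only because $i$ is a full linear isomorphism rather than merely a conical one). Finally, both your proof and the paper's share the same residual blemish, which you at least flag: the theorem quantifies over $F:X\to Y$, yet the counterexample lives on the ray $X_h$, and the obstruction to extending it $K$-convexly past $0$ is real; so the statement has to be read, as in \cite{lesniewski}, for functions defined on convex subsets containing $0$.
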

\begin{proof}
We proceed by contradiction. We assume that there exists a subspace $Z\subset Y$ isomorphic to $c_0$, i.e. there is a continuous linear mapping $i: c_0 \rightarrow Z.$
Our aim is to construct a cone-convex function $F: X \rightarrow Y$ which does not possess the directional derivative at 0 for some $h\in X, h\neq 0.$

Let us take $h\in X$, $h\neq 0 $ and $t_k:=\frac{1}{k}, $ $k=1,2,\dots\ .$
Let $\{y_k\}\subset c_0$ be a sequence in $c_0.$
 
The proof will be in two steps.

Step 1. By using \eqref{et1}, let us construct a cone-convex function $\bar{F}:X \rightarrow c_0$ such that 
$$
\bar{F}(\frac 1 k h)=y_k \frac 1 k, \ k=1,2,\dots\ .
$$

Let  $\bar{F}: \{x\in X : x=\beta h, \beta \ge 0\} \rightarrow c_0$ be defined as follows. For $r>0$ 
$$
\bar{F}(rh) := \sum\limits_{i=1}^\infty \bar{F}_i(x),$$
where 
$\bar{F}_i: \{x\in X : x=\beta h, \beta \ge 0\} \rightarrow c_0$ is defined as
$$\bar{F}_1(x):=\begin{cases}
y_{1}t_{1}+\frac{r-t_{1}}{t_{2}-t_{1}}(y_{2}t_{2}-y_{1}t_{1})&  \ t_{2}< r\\
0 & \  r <t_{2}\end{cases}$$
and for $i\ge 2$
$$\bar{F}_i(x):=\begin{cases}
y_{i}t_{i}+\frac{r-t_{i}}{t_{i+1}-t_{i}}(y_{i+1}t_{i+1}-y_{i}t_{i})&  \ t_{i+1}< r\le t_{i}\\
0 & \  r \notin (t_{i+1}, t_{i}]
\end{cases}.$$

In view of Proposition \ref{bies} we need to find a sequence $\{y_k\}$ such that

1. the sequence $\{b_k\}$ defined as 
\begin{equation}
\label{rownanie}
2y_{k+1}-y_k-y_{k+2}=b_k,\  k=1,2,\dots
\end{equation}
is an unconditional basis of $c_0,$ 

2. $\{y_k\}$ is not weakly convergent.

Let us prove by induction that 
\begin{equation}
\label{ind}
y_k=(k-1)y_2-(k-2)y_1 - \sum\limits_{i=1}^{k-2} (k-i-1)b_i.
\end{equation}

Let us assume that equality \eqref{ind} holds for all $n\le k$.
By \eqref{rownanie} we have

\begin{align*}
& y_{k+1}=2y_k-y_{k-1}-e_{k-1}&=\\
&2[(k-1)y_2-(k-2)y_1-\sum\limits_{i=1}^{k-2}(k-i-1)b_i]-y_{k-1}-b_{k-1} &=\\
& 
2[(k-1)y_2-(k-2)y_1-\sum\limits_{i=1}^{k-2}(k-i-1)b_i]-\\
&\underbrace{[(k-2)y_2-(k-3)y_1-\sum\limits_{i=1}^{k-3}(k-i-2)b_i]}_{y_{k-1}}-b_{k-1}&=\\
& ky_2-(k-1)y_1-\sum\limits_{i=1}^{k-1}(k-i)b_i
\end{align*}
which proves \eqref{ind}.

Let $\{b_i\}\subset c_0$ be the unconditional basis defined  in Example \ref{exex}, i.e. $b_i=(0,0,\dots,\frac{1}{i},0,\dots), i=1,2,\dots\ .$

Let $K:=-K_{\{b_k\}}.$
By Proposition \ref{bies}, cone $K$ is normal and generating in $c_0.$

Let us take $y_1=(0,0,\dots)$ and $y_2=(1,\frac 1 2 , \frac 1 3 ,\dots).$

By \eqref{ind}, we get
\begin{align*}
y_3=&(1,1,\frac 2 3, \frac 2 4, \frac 2 5, \frac 2 6,\dots),\\
\dots\\
y_k=&(1,1,\dots,\underbrace{1}_{k-1}, \frac{k-1}{k}, \frac{k-1}{k+1},\dots).
\end{align*}

Moreover, observe that $\{y_k\}$ is weak Cauchy.
Let $y^*=(f_1,f_2,\dots)\in l_1.$ We have
$$
y^*(y_{k+1}-y_k)=f_{k+1} \frac{2}{k+1} + f_{k+2}\frac{1}{k+2} + f_{k+3}\frac{1}{k+3}+\dots \xrightarrow{k\rightarrow \infty} 0.
$$
Since $y_k \rightarrow (1,1,\dots)\notin c_0$ sequence $\{y_k\}$ is not weakly convergent.

Step 2. By assumption, there exists an isomorphism $i$ between $c_0$ and the subspace $Z\subset Y.$ 

Let us define $F:X\rightarrow Y$ by the formula $F:=i\circ \bar{F}.$
From Proposition \ref{roz}, the function $F$ is $i(K)$-convex.

The directional derivative for the function $F$ at $x_0=0$ is equal  
$$
\lim\limits_{k\rightarrow \infty }\frac{F(t_kh)}{t_k}= \lim\limits_{k\rightarrow \infty } \frac{i(\bar{F}(t_kh))}{t_k} =\lim\limits_{k\rightarrow \infty } i\left( \frac{\bar{F}(t_kh)}{t_k} \right) =\lim_{k\rightarrow \infty} i(y_k).
$$
By the fact that $i$ is an isomorphism and by Proposition \ref{izomnorm}, cone $i(K)$ is closed normal and generating in $Z$.
Since $\{y_k\}$ is not weakly convergent, the function $F$ is $i(K)$-convex  and is not directionally differentiable at $x_0=0$ in the direction $0\neq h \in X.$
\end{proof}
Since $\{b_i\}$ is an unconditional basis for $c_0$, from Theorem \ref{mc} we get the following corollary.
\begin{corollary}
\label{cor1}
Let $X$ be a linear space and $Y$ be a Banach space.  If for every  closed convex  normal and generating cone $K\subset Y$ and 
 for  every $K$-convex function $F:X\rightarrow Y$ there exist directional derivative for every $h\in X$ at $x_0=0,$ then $Y$ is not isomorphic to $c_0.$
\end{corollary}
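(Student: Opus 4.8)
The plan is to prove the contrapositive: assuming $Y$ contains a subspace $Z$ isomorphic to $c_0$, I will exhibit a single closed convex normal cone and a cone-convex function $F:X\to Y$ whose difference quotients $F(th)/t$ fail to converge as $t\downarrow 0$ for some direction $h\neq 0$. All the genuine work is carried out inside $c_0$ and then transported to $Y$ through the isomorphism $i:c_0\to Z$; Propositions \ref{roz} and \ref{izomnorm} are precisely the transport tools, the former preserving cone-convexity under a conical isomorphism and the latter preserving closedness and normality of a cone under a space isomorphism.

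The heart of the argument is to build, in $c_0$, a sequence $\{y_k\}$ that is weak Cauchy but not weakly convergent and whose second differences form an unconditional basis. I would start from the unconditional basis $b_i=\tfrac1i e_i$ of Example \ref{exex} and \emph{invert} the recurrence \eqref{rownanie}, that is, read $2y_{k+1}-y_k-y_{k+2}=b_k$ as a second-order difference equation with initial data $y_1,y_2$. Setting $d_k:=y_{k+1}-y_k$ turns it into $d_{k+1}=d_k-b_k$, which integrates to the closed form \eqref{ind}; I would confirm \eqref{ind} by induction. Choosing $y_1=0$ and $y_2=(1,\tfrac12,\tfrac13,\dots)$ — the weights $\tfrac1i$ being exactly what keeps $y_2$, and hence every iterate $y_k$, inside $c_0$ — produces $y_k=(1,\dots,1,\tfrac{k-1}{k},\tfrac{k-1}{k+1},\dots)$ with $k-1$ leading ones. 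For each $y^*=(f_i)\in l_1=c_0^*$ a dominated-convergence estimate gives $y^*(y_k)\to\sum_i f_i$, so $\{y_k\}$ is weak Cauchy; but coordinatewise $y_k\to(1,1,\dots)\notin c_0$, so no weak limit exists in $c_0$.

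With this sequence in hand I would feed $\{y_k\}$ into the construction \eqref{et1}, obtaining $\bar F:X_h\to c_0$ with $\bar F(t_k h)=y_k t_k$ at $t_k=\tfrac1k$. Proposition \ref{bies} then guarantees that $\bar F$ is $(-K_{\{b_k\}})$-convex and that $K:=-K_{\{b_k\}}$ is closed, generating and normal in $c_0$. For the transport: $i$ restricts to a conical isomorphism of the generating cone $K$ onto $i(K)$, so Proposition \ref{roz} makes $F:=i\circ\bar F$ an $i(K)$-convex function, while Proposition \ref{izomnorm}, applied to the isomorphism $c_0\to Z$ (and noting that $Z$ is complete, hence closed in $Y$, so that closedness and normality survive in $Y$), makes $i(K)$ a closed convex normal cone in $Y$. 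Finally, since $\bar F(t_k h)/t_k=y_k$, the difference quotients are $F(t_k h)/t_k=i(y_k)$; were $F'(0;h)$ to exist, $i(y_k)$ would converge in norm, whence $y_k=i^{-1}(i(y_k))$ would converge in norm and a fortiori weakly, contradicting the previous paragraph. This contradicts the hypothesis and finishes the proof.

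The main obstacle is the simultaneous fit demanded in the middle step: the very same sequence $\{b_k\}$ must be unconditional, so that Theorem \ref{mc} through Proposition \ref{bies} yields a \emph{normal} generating cone and so that inequality \eqref{glen} holds, while its discrete antiderivative $\{y_k\}$ must witness the failure of weak sequential completeness of $c_0$. The delicate point is the choice of weights $\tfrac1i$: they are forced on us because the naive basis $b_i=e_i$ already drives $y_2$ out of $c_0$, yet they must remain compatible with $y_k\to(1,1,\dots)$ so that the escape from $c_0$ — the exact feature separating $c_0$ from weakly sequentially complete spaces — persists in the limit.
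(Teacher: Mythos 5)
Your work inside $c_0$ --- inverting the recurrence $2y_{k+1}-y_k-y_{k+2}=b_k$ via $d_k=y_{k+1}-y_k$, choosing $y_1=0$ and $y_2=(1,\frac12,\frac13,\dots)$, checking that $\{y_k\}$ is weak Cauchy but not weakly convergent, and invoking Propositions \ref{bies}, \ref{roz} and \ref{izomnorm} to transport the construction --- is correct and is exactly the paper's construction. The genuine gap is at the very first and very last step: you negate the wrong conclusion. Corollary \ref{cor1} asserts that $Y$ \emph{itself} is not isomorphic to $c_0$, so its contrapositive starts from an isomorphism $i$ of $c_0$ \emph{onto} $Y$; you instead assume only that $Y$ contains a subspace $Z$ isomorphic to $c_0$, which is the contrapositive of Theorem \ref{twierdzenie}, not of the corollary. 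This matters because the corollary's hypothesis is weaker than the theorem's: it promises directional derivatives only for cones that are closed, convex, normal \emph{and generating} in $Y$. The cone $i(K)$ you produce is generating in $Z$ but not in $Y$ when $Z\subsetneq Y$: indeed $i(K)-i(K)=i(K-K)=i(c_0)=Z\neq Y$. So $i(K)$ lies outside the class of cones to which the corollary's hypothesis applies, your $i(K)$-convex function without a directional derivative contradicts nothing, and the argument does not close. (Put differently: the corollary is not a formal consequence of the theorem, since weakening the class of cones weakens the hypothesis; it needs the extra observation below.)

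The repair is small. Assume for contradiction that $Y$ is isomorphic to $c_0$, so that $i:c_0\rightarrow Y$ is onto. Then everything you did goes through verbatim, and in addition $i(K)$ \emph{is} generating in $Y$: the cone $K=-K_{\{b_k\}}$ is generating in $c_0$ by Theorem \ref{mc} (because $\{b_k\}$ is an unconditional basis), hence $i(K)-i(K)=i(c_0)=Y$. Now $i(K)$ is closed, convex, normal and generating in $Y$, the corollary's hypothesis does apply to it, and the non-convergence of $F(t_kh)/t_k=i(y_k)$ gives the contradiction. This is precisely how the paper derives the corollary: it reruns the proof of Theorem \ref{twierdzenie} with $Z=Y$ and uses Theorem \ref{mc} to upgrade the constructed cone from ``normal'' to ``normal and generating.''
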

If $Y$ is weakly sequentially complete Banach space, then $K$-convex function, where $K\subset Y$ is closed convex and normal has directional derivative for every $x_0 \in X,h \in X\setminus\{0\}$ (see \cite{valadier}).
From Theorem \ref{dlakrat} and Corollary \ref{cor1} we get the characterization of weakly sequentially complete Banach spaces in terms of existence of directional derivative for $K$-convex functions.
\begin{theorem}
Let $Y$ be a Banach lattice. Space $Y$ is weakly sequentially complete if and only if, for every closed convex normal cone $K\subset Y$ and every $K$-convex function $f:X\rightarrow Y$ the directional derivative exist for all $x_0\in X, h\in X\setminus \{0\}.$ 
\end{theorem}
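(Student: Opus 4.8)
The plan is to prove the equivalence by combining three facts already available in the paper: Valadier's existence theorem, the main Theorem~\ref{twierdzenie}, and the lattice characterization of Theorem~\ref{dlakrat}. Neither implication calls for a new construction; the entire content is in matching up hypotheses, so the two directions can be dispatched quickly.

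For the implication that weak sequential completeness forces the existence of directional derivatives, I would simply invoke the result of Valadier recalled in the paragraph preceding the statement: whenever $Y$ is a weakly sequentially complete Banach space and $K\subset Y$ is closed convex and normal, every $K$-convex $f:X\rightarrow Y$ possesses a directional derivative at every $x_0\in X$ in every direction $h\neq 0$. This is exactly the right-hand side of the equivalence, so nothing further is required here.

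For the converse, I would assume that for every closed convex normal cone $K\subset Y$ and every $K$-convex $f:X\rightarrow Y$ the directional derivative exists for all $x_0$ and all $h\neq 0$. Specializing to $x_0=0$, the hypothesis of Theorem~\ref{twierdzenie} is met, so $Y$ contains no subspace isomorphic to $c_0$. At this point I would use the standing assumption that $Y$ is a Banach lattice: by Theorem~\ref{dlakrat}, a Banach lattice omits $c_0$ as a subspace if and only if it is weakly sequentially complete. Chaining the two statements yields that $Y$ is weakly sequentially complete, closing the loop.

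I do not anticipate a genuine obstacle, since the statement is a synthesis of the paper's main theorem with the lattice dichotomy. The only points deserving care are verifying that the strong ``for all $x_0$'' hypothesis does indeed specialize to the ``$x_0=0$'' hypothesis demanded by Theorem~\ref{twierdzenie}, and recognizing that the Banach lattice assumption is indispensable, being precisely what licenses the passage through Theorem~\ref{dlakrat}. For a general Banach space the absence of a copy of $c_0$ does not force weak sequential completeness; the James space $J$, noted earlier to contain no copy of $c_0$ yet admitting a non-trivial weak-Cauchy sequence, exhibits exactly this gap and so explains why the lattice hypothesis cannot be dropped.
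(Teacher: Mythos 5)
Your proof is correct and follows essentially the same route as the paper: Valadier's theorem gives the forward implication, and the converse is obtained by specializing the derivative hypothesis to $x_0=0$ and chaining the paper's $c_0$-result with Theorem~\ref{dlakrat}. In fact your version is slightly tighter than the paper's own wording: the paper cites Corollary~\ref{cor1}, whose conclusion is only that $Y$ is not isomorphic to $c_0$, whereas Theorem~\ref{dlakrat} needs the stronger statement that $Y$ contains no \emph{subspace} isomorphic to $c_0$ --- exactly the conclusion of Theorem~\ref{twierdzenie}, which is what you invoke.
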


\end{document}